\documentclass[11pt,a4paper]{article}
\usepackage{amssymb,amsmath,amsthm,bm}
\usepackage{graphicx}
\usepackage{geometry}
\usepackage{float}
\usepackage{tikz}
\graphicspath{{./figures/}}
\usepackage[maxbibnames=99]{biblatex}
\addbibresource{mybib.bib}
\newtheorem{teo}            {Theorem}
\newtheorem{lema}     [teo]{Lemma}
\newtheorem{defin}[teo]{Definition}
\newtheorem{corollary} [teo]{Corollary}

\newtheorem{example} [teo]          {Example}
\newtheorem{obs} [teo]           {Remark}
\newtheorem{prop} [teo]       {Proposition}

\newcommand{\mc}{\multicolumn{1}{c}}

\DeclareMathOperator{\ric}{Ric}
\DeclareMathOperator{\Ric}{Ric}

\title{Ricci curvature, graphs and eigenvalues}
\author{Viola Siconolfi\ \footnotemark[1]}


\begin{document}
\maketitle
\footnotetext[1]{
Fakult\"at f\"ur Mathematik, Universit\"at Bielefeld, Germany.\\
\textit{Email address: }\texttt{vsiconolf@math.uni-bielefeld.de}.} 
\footnotetext[2]{
An extended abstract containing some of the results of this paper appeared in the proceedings of FPSAC 2020 \cite{sic}}

\begin{abstract}
We express the discrete Ricci curvature of a graph as the minimal eigenvalue of a family of matrices, one for each vertex of a graph whose entries depend on the local adjaciency structure of the graph. Using this method we compute or bound the Ricci curvature of Cayley graphs of finite Coxeter groups and affine Weyl groups. As an application we obtain an isoperimetric inequality that holds for all Cayley graphs of finite Coxeter groups.
\end{abstract}
Key Words: Ricci curvature, Cayley graphs, Coxeter Theory, graph Theory\\

MSC2020: 20F55, 53A70, 05C50 

\section{Introduction}
There is a trend in graph theory of studying discrete versions of concepts that classically appear in differential geometry (see for example \cite{bre7},\cite{bre6},\cite{bre5},\cite{bre17},\cite{bre12},\cite{bre13},\cite{bre16},\cite{bre9},\cite{bre4},\cite{bre15}\\,\cite{bre3},\cite{bre11},\cite{bre10},\cite{bre8},\cite{bre14},\cite{bre1}). In particular, a definition of Ricci curvature for graphs was given in 1999 by Schmuckenschl{\"a}ger. The intuition behind this definition is to translate in a discrete setting a Bochner-type inequality (see \cite{shmuck}). Bounds on this Ricci curvature imply a bound on the spectral gap, a number of isoperimetric inequalities (see \cite{isop}) and a bound on the norm of some heat kernel operators (see \cite[Sections 3 and 4]{klart}). Despite these applications, there are only a few families of graphs for which bounds, or exact values, of this Ricci curvature are known (see \cite{klart}).

In this paper we describe some general results about the computation of the Ricci curvature and then apply them to obtain exact values, or bounds, on the Ricci curvature of certain families of graphs that play a role in Coxeter theory.

Our main general result is Theorem \ref{teomatrix} which reduces the computation of the Ricci curvature of a graph to that of the minimum eigenvalue of a family of matrices, one for each vertex of the graph. 
This result enables us to use Gershgorin's Theorem to study the values of the Ricci curvature of particular families of graphs. In particular we bound the Ricci curvature of trees and in general of graphs with no triangles nor quadrilaterals.

We then focus on Cayley graphs of Coxeter groups with the simple reflections as set of generators, equivalently, the Hasse graphs of the weak order, see e.g.\cite[Chapter 3]{BB}. With a reasoning similar to the one carried out for Theorem \ref{teomatrix} we find the Ricci curvature of these graphs for all finite Coxeter groups and some affine Weyl group. The study of the Ricci curvature through eigenvalues in this case leads back to known formulas for the eigenvalues of tridiagonal and circulant Toeplitz matrices.
As a consequence we obtain an isoperimetric inequality that holds for all finite Coxeter groups.

The motivation to study these graphs comes also from the paper by Klartag, Kozma, Tetali and Ralli in \cite{klart} where some results about the Ricci curvature of Cayley graphs are obtained.

This paper is divided in three sections.
The next section contains preliminaries. We recall the definition of Ricci curvature and some basic results about it. We then state some linear algebra results that we need in the sequel (in sections 3 and 4), and conclude with an introduction about Coxeter groups and Bruhat order.

Section 3 is devoted to new results about the Ricci curvature of general graphs. Our main result gives a constructive method to compute the local Ricci curvature of a graph (Theorem \ref{teomatrix}).
As corollaries we obtain bounds on the Ricci curvature of particular families of graphs, and we present a graph whose Ricci curvature is $-\infty$.

The fourth section deals with Cayley graphs associated to finite Coxeter groups and to affine irreducible Weyl groups. In most cases we obtain the exact value of the curvature of these graphs, in others we give a bound. As a corollary of these results we describe an isoperimetric inequality that holds for Cayley graphs of all finite Coxeter groups.

\section{Preliminaries}\label{prelimin}

\subsection{Ricci curvature of a locally finite graph}\label{radici}
We recall here the definitions and main facts about the discrete Ricci curvature of a locally finite graph, the main reference for this section is Subsection $1.1$ from \cite{klart}, the notation is slightly different.  

Let $G$ be a graph, 
we assume it to be undirected, with no loops and with no multiple edges (i.e. a simple graph); also we ask that it has no isolated vertices.
We denote by $\mathcal{V}(G)$ the set of vertices of $G$, by $\mathcal{E}(G)$ its edges and we take $\delta(x,y)$ as the function $\delta:\mathcal{V}(G)\times\mathcal{V}(G)\rightarrow \mathbb{N}\cup \{\infty\}$ that gives the distance between two vertices. For a fixed $x\in\mathcal{V}(G)$ and $i\in\mathbb{N}$ we define the set:
\[
B(i,x):= \{u\in\mathcal{V}(G)|\delta(x,u)=i\}; 
\]
we denote by $d(x)$ the cardinality of $B(1,x)$ and call it the degree of $x$. 

\begin{defin}
We say that a given $G$ is locally finite if $d(x)< \infty$ for all $x\in \mathcal{V}(G)$.
\end{defin}
From now on we assume $G$ to be locally finite.

Given real functions $f$ and $g$ on $\mathcal{V}(G)$ and $x\in\mathcal{V}(G)$ we define the following operators:
\begin{itemize}
  \item $\Delta (f)(x):=\sum_{v\in B(1,x)}(f(v)-f(x))$;
  \item $\Gamma (f,g)(x):=\frac{1}{2}\sum_{v\in B(1,x)}(f(x)-f(v))(g(x)-g(v))$;
  \item $\Gamma_2(f)(x):=\frac{1}{2}\left(\Delta(\Gamma(f,f))(x)\right)-\Gamma(f,\Delta(f))(x)$.
\end{itemize}
Instead of $\Gamma(f,f)(x)$ we write $\Gamma(f)(x)$. Note that $\Gamma(f)(x)\geq 0$ $\forall x\in \mathcal{V}(G)$, the equality holds if and only if $f(x)=f(v)$ for all $v$ in $B(1,x)$. Also note that $\Delta(f)(x)=\sum_{v\in B(1,x)}f(v)-d(x)f(x)$.

\medskip
These definitions allow us, following \cite{klart} and \cite{shmuck}, to introduce the Ricci curvature of a graph:
\begin{defin}\label{curvy}
The discrete Ricci curvature of a graph $G$, denoted $\ric(G)$, is the maximum value $K\in\mathbb{R}\cup \{-\infty\}$ such that for any real function $f$ on $\mathcal{V}(G)$ and any vertex $x$, $\Gamma_2(f)(x)\geq K\Gamma(f)(x)$ .
\end{defin}
There is also a local version of this definition: 
\begin{defin}
The local Ricci curvature of a graph $G$ at a given point $\bar{x}\in\mathcal{V}(G)$ is the maximum value $K\in\mathbb{R}\cup \{-\infty\}$ such that for any real function $f$ on $\mathcal{V}(G)$ , $\Gamma_2(f)(\bar{x})\geq K\Gamma(f)(\bar{x})$ holds. The local curvature so defined is denoted $\Ric(G)_{\bar{x}}$. 
\end{defin}

We obtain that $\Ric(G)=inf_{x\in\mathcal{V}(G)}\Ric(G)_x$.

For brevity, in the rest of this work, we often say "curvature" instead of "Discrete Ricci curvature".

\begin{obs}
Note that if $c,d\in\mathbb{R}$
and $f:\mathcal{V}(G)\rightarrow \mathbb{R}$
then $\Delta(f)=\Delta(f+c)$, $\Gamma(f+c,g+d)=\Gamma(f,g)$ and $\Gamma_{2}(f+c)=\Gamma_2(f)$. 
We may therefore assume that in the expression $\Gamma_2(f)(x)\geq K\Gamma(f)(x)$, $f$ satisfies $f(x)=0$. This allow us to use the following formula for $\Gamma$:
$$\Gamma (f)(x)=\frac{1}{2}\sum_{v\in B(1,x)}f(v)^2.$$
\end{obs}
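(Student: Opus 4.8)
The plan is to verify the three invariance identities directly from the definitions and then obtain the normalization and the simplified formula as immediate consequences. The single observation driving everything is that $\Delta$, $\Gamma$ and $\Gamma_2$ are built entirely out of \emph{differences} of function values at adjacent vertices, and such differences are unaffected by adding a constant to a function.

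First I would treat $\Delta(f+c)=\Delta(f)$: substituting $f+c$ into $\Delta(f)(x)=\sum_{v\in B(1,x)}(f(v)-f(x))$ turns each summand into $(f(v)+c)-(f(x)+c)$, and the $c$ cancels. For $\Gamma(f+c,g+d)=\Gamma(f,g)$ I would substitute into $\Gamma(f,g)(x)=\tfrac12\sum_{v\in B(1,x)}(f(x)-f(v))(g(x)-g(v))$; the constants $c$ and $d$ cancel inside their respective factors, leaving the product unchanged. Both are one-line computations with no subtlety.

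The only step requiring attention is $\Gamma_2(f+c)=\Gamma_2(f)$, which I would establish by chaining the first two identities through $\Gamma_2(f)=\tfrac12\Delta(\Gamma(f,f))-\Gamma(f,\Delta(f))$. For the first term, $\Gamma(f+c,f+c)=\Gamma(f,f)$ by the second identity (with $g=f$, $d=c$), so applying $\Delta$ to equal functions gives equal functions. For the second term I would first use $\Delta(f+c)=\Delta(f)$ to rewrite $\Gamma(f+c,\Delta(f+c))$ as $\Gamma(f+c,\Delta(f))$, and then apply the second identity with $d=0$ to drop the $+c$ in the first slot, obtaining $\Gamma(f,\Delta(f))$. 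The point to be careful about is precisely the order here: the inner argument $\Delta(f+c)$ itself depends on $f$, so I must invoke $\Delta$-invariance on that argument \emph{before} invoking $\Gamma$-invariance in the first slot. This is the one place where a careless substitution could go wrong.

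Finally, the normalization and formula follow formally. Since both sides of $\Gamma_2(f)(x)\ge K\Gamma(f)(x)$ are invariant under $f\mapsto f+c$, the inequality holds for $f$ if and only if it holds for $f+c$; choosing $c=-f(x)$ gives a representative vanishing at $x$, and as $f$ ranges over all functions this representative ranges over exactly the functions with value $0$ at $x$, so checking the inequality on that subclass is equivalent to checking it in general. Substituting $f(x)=0$ into $\Gamma(f)(x)=\tfrac12\sum_{v\in B(1,x)}(f(x)-f(v))^2$ collapses it to $\tfrac12\sum_{v\in B(1,x)}f(v)^2$. I expect no genuine obstacle: the content is entirely formal, and the only risk is mis-ordering the substitutions in the $\Gamma_2$ computation just described.
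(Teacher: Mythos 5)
Your proposal is correct: all three invariance identities follow exactly as you compute them, and the normalization and simplified formula for $\Gamma$ are the immediate consequences you describe; the paper states this remark without proof, treating it as routine, and your verification is precisely the argument being left implicit. One minor note: the ordering you flag in the $\Gamma_2$ step is not actually essential, since you could equally well first use $\Gamma(f+c,g)=\Gamma(f,g)$ with $g=\Delta(f+c)$ fixed and then replace $\Delta(f+c)$ by $\Delta(f)$ in the second slot, but your chosen order is perfectly valid.
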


The following result appears in \cite[Subsection 1.1]{klart}:

\begin{prop}\label{proof2}
$\Gamma_2$ can be expressed through the following formula when $f(x)=0$:
$$2\Gamma_2(f)(x)=\frac{1}{2}\sum_{u\in B(2,x)}\sum_{v\in B(1,u)\cap B(1,x)}(f(u)-2f(v))^2+\left( \sum_{v\in B(1,x)}f(v) \right)^2+
$$
$$
+\sum_{\{v,v'\}\in \mathcal{E}(G)}\left(2(f(v)-f(v'))^2+\frac{1}{2}(f(v)^2+f(v')^2)\right)+\sum_{v\in B(1,x)}\frac{4-d(x)-d(v)}{2} f(v)^2,
$$
where the third sum runs over all $v,v'\in B(1,x)$ such that $\{v,v'\}$ is an edge in $G$.
\end{prop}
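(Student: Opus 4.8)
The plan is to expand both sides directly from the definitions and match the coefficient of every monomial $f(u)f(v)$, using throughout the normalisation $f(x)=0$ permitted by the preceding Remark. I would start from $2\Gamma_2(f)(x)=\Delta(\Gamma(f))(x)-2\Gamma(f,\Delta(f))(x)$, which is twice the definition of $\Gamma_2$, and treat the two summands separately.

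For the first summand I would write $\Delta(\Gamma(f))(x)=\sum_{w\in B(1,x)}\Gamma(f)(w)-d(x)\Gamma(f)(x)$ and use $\Gamma(f)(x)=\frac12\sum_{v\in B(1,x)}f(v)^2$. For the second, substituting $f(x)=0$ and $\Delta(f)(x)=\sum_{v\in B(1,x)}f(v)$ gives
\[
-2\Gamma(f,\Delta(f))(x)=\Big(\sum_{v\in B(1,x)}f(v)\Big)^2-\sum_{v\in B(1,x)}f(v)\sum_{u\in B(1,v)}f(u)+\sum_{v\in B(1,x)}d(v)f(v)^2 .
\]
The decisive manipulation is the bookkeeping in $\sum_{w\in B(1,x)}\Gamma(f)(w)$ and in the mixed sum $\sum_v f(v)\sum_{u\in B(1,v)}f(u)$: for each $w\in B(1,x)$ I would split its neighbours $u\in B(1,w)$ into three classes — (i) $u=x$; (ii) $u\in B(1,x)$, so that $\{w,u\}$ is an edge inside $B(1,x)$ (a triangle through $x$); and (iii) $u\in B(2,x)$. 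The class-(iii) quadratic pieces $\frac12(f(w)-f(u))^2$ from $\Delta(\Gamma(f))$ combine with the cross terms $-f(w)f(u)$ from the mixed sum to build exactly the squares $\frac12(f(u)-2f(v))^2$ indexed by $u\in B(2,x)$ and $v\in B(1,u)\cap B(1,x)$; the class-(ii) pieces assemble into the edge sum $2(f(v)-f(v'))^2+\frac12(f(v)^2+f(v')^2)$; and $\big(\sum_v f(v)\big)^2$ is inherited directly from the second summand.

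The main obstacle — and the only genuinely delicate point — is matching the diagonal coefficient of $f(v)^2$ for $v\in B(1,x)$, since it collects contributions from the class-(i) terms, from $-d(x)\Gamma(f)(x)$, from $\sum_v d(v)f(v)^2$, from the $4f(v)^2$ part of each square $\frac12(f(u)-2f(v))^2$ (one for each neighbour of $v$ in $B(2,x)$), and from the $f(v)^2$ parts of the edge sum (one for each triangle through $x$ containing $v$). Writing $t_v=|B(1,v)\cap B(1,x)|$ for the number of triangle-neighbours, so that $v$ has $d(v)-1-t_v$ neighbours in $B(2,x)$, I would check that all these contributions collapse to $\frac{4-d(x)-d(v)}{2}$, the $t_v$-dependence cancelling. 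The remaining monomials are the cross terms $f(v)f(v')$ for distinct $v,v'\in B(1,x)$: for a non-edge both sides give coefficient $2$ (from $\big(\sum_v f(v)\big)^2$ alone), while for an edge the extra contributions combine into the $2(f(v)-f(v'))^2$ of the edge sum. Checking these matches completes the proof.
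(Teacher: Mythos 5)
Your proposal is correct, and since the paper does not prove Proposition \ref{proof2} at all (it simply quotes the formula from Subsection 1.1 of \cite{klart}), the direct expansion you give is exactly the standard verification one would expect there: the displayed identity $-2\Gamma(f,\Delta f)(x)=\bigl(\sum_{v\in B(1,x)}f(v)\bigr)^2-\sum_{v\in B(1,x)}f(v)\sum_{u\in B(1,v)}f(u)+\sum_{v\in B(1,x)}d(v)f(v)^2$ is right, the three-class split of the neighbours of $w\in B(1,x)$ into $\{x\}$, $B(1,x)$ and $B(2,x)$ is exhaustive, and the off-diagonal coefficients ($-2$ per pair $f(u)f(v)$ with $u\in B(2,x)$ adjacent to $v$; $2$ for non-edge pairs and $-2$ for edge pairs inside $B(1,x)$) match on both sides. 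The one delicate step you flag, the diagonal coefficient of $f(v)^2$, does collapse as you claim: writing $m_v=d(v)-1-t_v$ for the number of neighbours of $v$ in $B(2,x)$, both your expansion and the stated formula yield the coefficient $\tfrac{t_v}{2}+\tfrac{3d(v)}{2}-\tfrac{d(x)}{2}+1$, so the residual constant term is exactly $\tfrac{4-d(x)-d(v)}{2}$ with the $t_v$-dependence cancelling in the comparison.
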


The following simple observation does not appear anywhere in the literature so we include its proof:

\begin{lema}\label{proof3}
We can write
  \begin{equation}\label{equcurv}
  \text{Ric}(G)=\text{inf}_{x,f} \frac{\Gamma_2(f)(x)}{\Gamma(f)(x)} 
  \end{equation}
  where $x$ ranges in $\mathcal{V}(G)$ and $f$ ranges over the real functions defined on $\mathcal{V}(G)$ such that $f(x)=0$ and $\Gamma(f)(x)>0$.
\end{lema}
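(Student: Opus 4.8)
The plan is to unwind Definition \ref{curvy} into a family of scalar inequalities, one for each pair $(x,f)$, and then to show that the only pairs which genuinely constrain the admissible constants $K$ are those with $\Gamma(f)(x)>0$. By the Remark preceding Proposition \ref{proof2} we lose nothing by restricting attention to functions normalised so that $f(x)=0$, since both $\Gamma_2(f)(x)$ and $\Gamma(f)(x)$ are unchanged when a constant is added to $f$. With this normalisation, $\ric(G)$ is by definition the largest $K\in\mathbb{R}\cup\{-\infty\}$ for which
\[
\Gamma_2(f)(x)\ge K\,\Gamma(f)(x)
\]
holds simultaneously for every vertex $x$ and every such $f$.

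For a fixed pair $(x,f)$ with $\Gamma(f)(x)>0$, dividing the displayed inequality by the positive quantity $\Gamma(f)(x)$ shows that it is equivalent to $K\le \Gamma_2(f)(x)/\Gamma(f)(x)$. Hence, disregarding for the moment the pairs with $\Gamma(f)(x)=0$, the set of admissible $K$ is exactly $\{K : K\le \Gamma_2(f)(x)/\Gamma(f)(x)\text{ for all admissible }(x,f)\}$, whose maximum is $m:=\inf_{x,f}\Gamma_2(f)(x)/\Gamma(f)(x)$, the infimum ranging over the pairs described in the statement. (This infimum is taken over a nonempty set: since $G$ has no isolated vertices, every $x$ has a neighbour $v_0$, and the function supported on $v_0$ gives $\Gamma(f)(x)>0$. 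The identification of the maximum with $m$ remains valid in the limiting case $m=-\infty$, which then forces $\ric(G)=-\infty$.)

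It therefore remains to verify that the pairs with $\Gamma(f)(x)=0$ impose no further restriction, that is, that $\Gamma_2(f)(x)\ge 0$ holds automatically for such pairs; this is the single step that requires an argument. Recall from the Remark on $\Gamma$ that $\Gamma(f)(x)=0$ forces $f(v)=f(x)=0$ for every $v\in B(1,x)$. Substituting $f(v)=0$ for all $v\in B(1,x)$ into the formula of Proposition \ref{proof2}, the second, third and fourth sums vanish identically and the first collapses to a sum of squares, giving $2\Gamma_2(f)(x)=\tfrac12\sum_{u\in B(2,x)}\sum_{v\in B(1,u)\cap B(1,x)}f(u)^2\ge 0$. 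Thus $\Gamma_2(f)(x)\ge 0$, so every such pair satisfies $\Gamma_2(f)(x)\ge K\,\Gamma(f)(x)=0$ for each finite $K$ and never tightens the bound.

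Combining the two cases, the maximal admissible $K$ equals $m$, which is precisely \eqref{equcurv}. The main (indeed the only delicate) point is the nonnegativity of $\Gamma_2(f)(x)$ on the degenerate locus $\Gamma(f)(x)=0$; the rest is a direct rearrangement of the definition of $\ric(G)$.
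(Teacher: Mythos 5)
Your proof is correct and follows essentially the same route as the paper's: restrict to $f(x)=0$, observe that pairs with $\Gamma(f)(x)>0$ force $K\leq \Gamma_2(f)(x)/\Gamma(f)(x)$, and dispose of the degenerate pairs by showing $\Gamma_2(f)(x)\geq 0$ via Proposition \ref{proof2}. The only difference is that you carry out explicitly the substitution $f(v)=0$ for $v\in B(1,x)$ that the paper summarizes as ``it comes easily from Proposition \ref{proof2}.''
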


\begin{proof}
We know that $G$ has no isolated vertices, so there is a function $\bar{f}$ on $\mathcal{V}(G)$ such that $\Gamma(\bar{f})(x)> 0$. Any $K$ that satisfies 
$$\Gamma_2(f)(x)\geq K\Gamma(f)(x )\quad \forall x,\forall f$$  also satisfies
$$
K\leq \frac{\Gamma_2(g)(x)}{\Gamma(g)(x)}<\infty
\quad \forall x,g \text{ s.t.} \Gamma(g)(x)>0.
$$
Let now $\tilde{f}:\mathcal{V}(G)\rightarrow \mathbb{R}$ be such that $\Gamma(\tilde{f})(x)=0$, it comes easily from Proposition \ref{proof2} that in this case $\Gamma_2(\tilde{f})(x)\geq 0$ and so in particular $\Gamma_2(\tilde{f})(x)\geq K\Gamma(\tilde{f})(x)$.
The statement follows. 
\end{proof}

\begin{obs}
From Lemma \ref{proof3} we deduce the following formula for the local Ricci curvature:
\[
  \text{Ric}(G)_x:=\text{inf}_{f} \frac{\Gamma_2(f)(x)}{\Gamma(f)(x)},
\]
where $f$ ranges among the functions on $\mathcal{V}(G)$ such that $f(x)=0$ and $\Gamma(f)(x)\neq 0$.
\end{obs}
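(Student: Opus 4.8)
The plan is to repeat the argument of Lemma \ref{proof3} with the vertex $x$ held fixed instead of ranging over $\mathcal{V}(G)$. By definition $\Ric(G)_x$ is the largest $K \in \mathbb{R}\cup\{-\infty\}$ such that $\Gamma_2(f)(x) \geq K\,\Gamma(f)(x)$ for every real function $f$ on $\mathcal{V}(G)$. Invoking the invariance Remark ($\Gamma$ and $\Gamma_2$ are unaffected by adding a constant to $f$), I would first normalize to $f(x)=0$, so that $\Gamma(f)(x)=\tfrac{1}{2}\sum_{v\in B(1,x)}f(v)^2 \geq 0$.

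Next I would partition the admissible functions into those with $\Gamma(f)(x)>0$ and those with $\Gamma(f)(x)=0$. In the degenerate case the normalization forces $f(v)=0$ for every $v\in B(1,x)$; substituting this into Proposition \ref{proof2} kills all but the first summand, leaving $2\Gamma_2(f)(x)=\tfrac{1}{2}\sum_{u\in B(2,x)}\sum_{v\in B(1,u)\cap B(1,x)}f(u)^2 \geq 0$. Hence $\Gamma_2(f)(x)\geq 0 = K\,\Gamma(f)(x)$ holds for every $K$, so such functions place no restriction on $K$ and may be discarded from the optimization.

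For the functions with $\Gamma(f)(x)>0$ (equivalently $\Gamma(f)(x)\neq 0$, since $\Gamma(f)(x)\geq 0$) the inequality $\Gamma_2(f)(x)\geq K\,\Gamma(f)(x)$ rearranges to $K\leq \Gamma_2(f)(x)/\Gamma(f)(x)$. Thus the set of admissible $K$ is precisely $\{K : K\leq \Gamma_2(f)(x)/\Gamma(f)(x)\ \text{for all such } f\}$, whose maximum is $\inf_f \Gamma_2(f)(x)/\Gamma(f)(x)$, the claimed formula. To see that this infimum is taken over a nonempty set, I would use that $G$ has no isolated vertices: picking a neighbor $v_0$ of $x$ and setting $f(v_0)=1$ and $f\equiv 0$ elsewhere gives $\Gamma(f)(x)=\tfrac{1}{2}>0$.

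The argument is routine once the degenerate functions are dealt with; the one step that carries the content is the claim that functions with $\Gamma(f)(x)=0$ impose no bound on $K$. This is exactly where Proposition \ref{proof2} is needed, to certify $\Gamma_2(f)(x)\geq 0$ in that case, and it is the only place where anything beyond a formal rearrangement enters.
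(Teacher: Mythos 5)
Your proposal is correct and follows essentially the same route as the paper: the Remark is obtained by localizing the proof of Lemma \ref{proof3} at a fixed $x$, splitting off the degenerate functions with $\Gamma(f)(x)=0$ via Proposition \ref{proof2} and using the absence of isolated vertices to make the infimum nonempty. Your explicit computation that only the $B(2,x)$-sum survives in the degenerate case is a slightly more detailed justification of the step the paper dismisses as coming ``easily'' from Proposition \ref{proof2}, but it is the same argument.
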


\begin{obs}\label{obs2}
From the formulas for $\Gamma$ and $\Gamma_2$ we obtain that $\Ric(G)_x$ depends only on the subgraph obtained as the union of the paths of length 1 and 2 starting from $x$. The set of vertices of this graph is $\{x\}\cup B(1,x)\cup B(2,x)$ and the edges are the ones connecting vertices in $B(1,x)$ to vertices in $\{x\}\cup B(1,x)\cup B(2,x)$. We will refer to such a subgraph as the length-2 path subgraph of $x$. As a consequence two vertices with isomorphic length-2 path subgraphs have the same local Ricci curvature.
\end{obs}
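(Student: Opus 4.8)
The plan is to read off, directly from the formula for $\Gamma_2$ in Proposition \ref{proof2} together with the simplified expression $\Gamma(f)(x)=\frac{1}{2}\sum_{v\in B(1,x)}f(v)^2$, exactly which combinatorial data and which values of $f$ enter the quotient $\Gamma_2(f)(x)/\Gamma(f)(x)$. Since $\Ric(G)_x$ is the infimum of this quotient over all $f$ with $f(x)=0$ and $\Gamma(f)(x)\neq 0$, it suffices to show that both numerator and denominator are functions only of the length-2 path subgraph $H$ of $x$ and of the restriction of $f$ to $\mathcal{V}(H)=\{x\}\cup B(1,x)\cup B(2,x)$; the values of $f$ elsewhere will then be seen to be irrelevant.

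First I would treat the denominator: with $f(x)=0$ we have $\Gamma(f)(x)=\frac{1}{2}\sum_{v\in B(1,x)}f(v)^2$, which visibly depends only on the set $B(1,x)$ and the values $f(v)$ for $v\in B(1,x)$. Next I would go through the four summands in the expression for $2\Gamma_2(f)(x)$ one at a time. The first sum ranges over $u\in B(2,x)$ and, for each such $u$, over the neighbours $v\in B(1,u)\cap B(1,x)$; this uses only the vertex set $B(2,x)$, the edges between $B(1,x)$ and $B(2,x)$, and the values of $f$ on $B(1,x)\cup B(2,x)$, all of which are recorded in $H$. The second summand involves only $\sum_{v\in B(1,x)}f(v)$, and the third ranges over edges with both endpoints in $B(1,x)$; both are clearly determined by $H$ and $f|_{\mathcal{V}(H)}$.

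The one step requiring an argument, and the main point of the remark, is the fourth sum, which contains the degrees $d(x)$ and $d(v)$ for $v\in B(1,x)$. Here $d(x)=|B(1,x)|$ is immediate, but $d(v)$ is a priori a global quantity. I would observe that every neighbour $w$ of a vertex $v\in B(1,x)$ satisfies $\delta(x,w)\leq \delta(x,v)+1=2$, so $w\in\mathcal{V}(H)$, and the edge $\{v,w\}$ joins $v\in B(1,x)$ to a vertex of $\mathcal{V}(H)$ and is therefore present in $H$. Consequently the full neighbourhood of $v$ lies in $H$ and $d(v)$ equals the degree of $v$ computed in $H$; in particular $d(v)$ is recoverable from $H$ alone. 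Note that the degrees of the vertices $u\in B(2,x)$, which genuinely are not local, never appear in the formula, so no further analysis is needed.

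Combining these observations, $\Gamma_2(f)(x)$ and $\Gamma(f)(x)$ depend only on $H$ and on $f|_{\mathcal{V}(H)}$, and any assignment of reals to $\mathcal{V}(H)$ vanishing at $x$ extends to some $f$ on $\mathcal{V}(G)$ without affecting these quantities. Hence the infimum defining $\Ric(G)_x$ may equivalently be taken over functions on $\mathcal{V}(H)$, which proves that $\Ric(G)_x$ depends only on $H$. The final assertion then follows at once: a graph isomorphism between the length-2 path subgraphs of two vertices that carries one marked vertex to the other induces a value-preserving bijection between the admissible functions, hence identifies the two infima and yields equal local curvatures.
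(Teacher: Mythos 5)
Your proposal is correct and follows exactly the route the paper intends for this remark: reading off from Proposition \ref{proof2} and the formula $\Gamma(f)(x)=\frac{1}{2}\sum_{v\in B(1,x)}f(v)^2$ that every term is determined by the length-2 path subgraph and $f$ restricted to it. Your explicit verification that $d(v)$ for $v\in B(1,x)$ is computable inside the subgraph (since all neighbours of $v$ lie within distance $2$ of $x$ and those edges are included), while the degrees of vertices in $B(2,x)$ never appear, is precisely the point the paper leaves implicit, so there is nothing to add.
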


We conclude with the following remark about triangle-free graphs. These are graphs with no $x,u,v\in\mathcal{V}(G)$ such that $\{x,u\},\{x,v\},\{u,v\}\in \mathcal{E}(G)$:

\begin{obs}\label{triang}
If $G$ is a triangle-free graph, then:
\[
2\Gamma_2(f)(x)=\frac{1}{2}\sum_{u\in B(2,x)}\sum_{v\in B(1,u)\cap B(1,x)}(f(u)-2f(v))^2+
\]
\[
+(\sum_{v\in B(1,x)}f(v))^2+\sum_{v\in B(1,x)}\frac{4-d(x)-d(v)}{2} f(v)^2.
\]
We obtain this formula just by erasing the third sum in the equation in Proposition \ref{proof2}.
\end{obs}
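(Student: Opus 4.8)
The plan is to observe that the claimed simplification follows from a purely combinatorial fact: in a triangle-free graph the index set of the third sum is empty. First I would unpack the description of that sum. By the statement of Proposition \ref{proof2}, the third sum runs over all edges $\{v,v'\}\in\mathcal{E}(G)$ with \emph{both} endpoints $v,v'\in B(1,x)$. Since $v,v'\in B(1,x)$ means $\{x,v\},\{x,v'\}\in\mathcal{E}(G)$, and the condition $\{v,v'\}\in\mathcal{E}(G)$ is imposed as well, any term of the third sum corresponds to a triple $x,v,v'$ with all three pairwise edges $\{x,v\},\{x,v'\},\{v,v'\}$ present in $G$.

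Next I would invoke the definition of triangle-free given just before the statement: $G$ is triangle-free precisely when there are no $x,u,v\in\mathcal{V}(G)$ with $\{x,u\},\{x,v\},\{u,v\}\in\mathcal{E}(G)$. Comparing this with the description of the third sum, I see that its index set is exactly the collection of such forbidden triangles based at $x$. Hence, when $G$ is triangle-free, this index set is empty for every vertex $x$, and the third sum is an empty sum, equal to $0$.

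Finally I would substitute this into the identity of Proposition \ref{proof2}: deleting the (now vanishing) third sum leaves precisely the displayed expression for $2\Gamma_2(f)(x)$ in the statement, which completes the argument. There is essentially no analytic obstacle here; the only point requiring a moment of care is the bookkeeping in the previous paragraph, namely confirming that the summation range really consists of edges internal to $B(1,x)$ (so that each contributing edge, together with $x$, genuinely closes a triangle), rather than edges incident to $x$ or edges reaching into $B(2,x)$. Once that indexing is read off correctly from Proposition \ref{proof2}, the conclusion is immediate.
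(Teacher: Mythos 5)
Your proposal is correct and follows exactly the reasoning the paper intends: the remark's justification ("erasing the third sum") rests precisely on the observation that each term of that sum corresponds to an edge $\{v,v'\}$ with both endpoints in $B(1,x)$, which together with $x$ would close a triangle, so triangle-freeness forces the index set to be empty. You have merely made explicit the bookkeeping that the paper leaves implicit; there is no difference in approach.
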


We go on by recalling \cite[Theorem 1.2]{klart} and a corollary:

\begin{teo}\label{teotriang}
Let $G$ be a locally finite graph, $t(v,v')$ be the function that counts the number of triangles containing both the vertices $v$ and $v'$ and let $T=sup_{\{v,v'\}\subset \mathcal{V}(G)} t(v,v')$. Then $\Ric(G)\leq 2+\frac{T}{2}$.
\end{teo}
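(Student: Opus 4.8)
The plan is to use the infimum characterization of Lemma~\ref{proof3}: since $\Ric(G)=\inf_{x,f}\Gamma_2(f)(x)/\Gamma(f)(x)$, it suffices to produce a single vertex $x$ and a single admissible test function $f$ for which this ratio is at most $2+\tfrac{T}{2}$. If $T=\infty$ there is nothing to prove, so assume $T<\infty$. The vertex I would pick is one of minimum degree; such a vertex exists because $G$ has no isolated vertices, so the attained degrees form a nonempty subset of the positive integers and hence have a least element, even when $\mathcal{V}(G)$ is infinite. Write $d=d(x)$ for this minimal degree.

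For the test function I would take the function that is constant on the distance spheres around $x$: set $f(x)=0$, $f\equiv 1$ on $B(1,x)$ and $f\equiv 2$ on $B(2,x)$ (values further out are irrelevant, since by Remark~\ref{obs2} only the length-$2$ path subgraph of $x$ enters the computation). The point of the value $2$ is that it annihilates the first double sum in Proposition~\ref{proof2}: for every $u\in B(2,x)$ and every $v\in B(1,u)\cap B(1,x)$ one has $f(u)-2f(v)=2-2\cdot 1=0$. Carrying out the remaining terms of Proposition~\ref{proof2}, the edge terms inside $B(1,x)$ collapse (the gradient part vanishes because $f$ is constant there, and the contribution counting edges within $B(1,x)$ cancels against the degree sum), and one is left with $2\Gamma(f)(x)=d$ and $2\Gamma_2(f)(x)=\tfrac{d^2}{2}+\tfrac{3d}{2}-\tfrac{m_2}{2}$, where $m_2$ denotes the number of edges joining $B(1,x)$ to $B(2,x)$. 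Hence $\Ric(G)_x\le \tfrac{d}{2}+\tfrac{3}{2}-\tfrac{m_2}{2d}$.

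It then remains to compare this quantity with $2+\tfrac{T}{2}$. Counting incidences gives $m_2=\sum_{v\in B(1,x)}\bigl(d(v)-1-t(x,v)\bigr)$, because each neighbour $v$ contributes one edge to $x$, contributes $t(x,v)$ edges inside $B(1,x)$ (the common neighbours of $x$ and $v$ being exactly the apices of triangles on $\{x,v\}$), and sends the rest to $B(2,x)$. Substituting and simplifying, the desired inequality $\tfrac{d}{2}+\tfrac{3}{2}-\tfrac{m_2}{2d}\le 2+\tfrac{T}{2}$ is equivalent to $d-\overline{d}+\overline{t}\le T$, where $\overline{d}$ and $\overline{t}$ are the averages over $v\in B(1,x)$ of $d(v)$ and of $t(x,v)$ respectively. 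This follows at once: minimality of $d$ gives $d\le d(v)$ for every neighbour, hence $d\le\overline{d}$, while $t(x,v)\le T$ by the definition of $T$ gives $\overline{t}\le T$. Taking the infimum over vertices yields $\Ric(G)\le 2+\tfrac{T}{2}$.

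The one genuinely delicate point is the first (distance-$2$) double sum in the $\Gamma_2$ formula, which for a naive local bump would contribute a term growing with the degrees and would destroy the estimate; the whole argument hinges on neutralising it exactly by the shell-constant choice with value $2$ on $B(2,x)$, after which the only surviving degree dependence is linear in $d$ and is controlled by selecting $x$ of minimum degree. It is worth noting that equality is approached precisely when $d=\overline{d}$ and all incident triangle counts equal $T$, consistent with the extremal behaviour of complete graphs.
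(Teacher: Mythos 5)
Your proposal is correct, and it necessarily takes a different route from the paper, because the paper contains no proof of Theorem \ref{teotriang} at all: the statement is simply recalled from \cite[Theorem 1.2]{klart}. Your argument is a self-contained test-function proof resting only on Lemma \ref{proof3} and Proposition \ref{proof2}, and its steps check out. With $x$ of minimum degree $d$ and $f=\min(\delta(x,\cdot),2)$, the distance-two sum in Proposition \ref{proof2} vanishes identically; the edge sum inside $B(1,x)$ contributes exactly $E_1$, the number of edges inside $B(1,x)$ (one per unordered edge, which is the reading of the third sum under which Proposition \ref{proof2} is actually true, as a direct check on $K_3$ confirms); and since $\sum_{v\in B(1,x)}d(v)=d+2E_1+m_2$, with $m_2$ the number of edges from $B(1,x)$ to $B(2,x)$, the $E_1$ terms cancel, giving $2\Gamma_2(f)(x)=\frac{d^2}{2}+\frac{3d}{2}-\frac{m_2}{2}$ and $2\Gamma(f)(x)=d$, exactly as you state. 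Your reduction of the target inequality to $d-\overline{d}+\overline{t}\le T$ is algebraically right, and both ingredients closing it are valid: a vertex of globally minimal degree exists even in an infinite locally finite graph (the attained degrees form a nonempty set of positive integers), which forces $d\le\overline{d}$, and $t(x,v)\le T$ for every neighbour forces $\overline{t}\le T$. The minimum-degree choice is the genuinely clever step --- for an arbitrary vertex the quantity $d-\overline{d}$ has no sign, and a maximum-degree vertex need not exist in an infinite graph --- and your closing remark is consistent: on $K_n$ one has $m_2=0$ and $d=n-1$, so your upper bound $\frac{d}{2}+\frac{3}{2}-\frac{m_2}{2d}$ equals $\frac{n+2}{2}=2+\frac{T}{2}$, matching the extremal case.
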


\begin{corollary}\label{coro}
Let $G$ be a graph with no triangles, then $\Ric(G)\leq2$.
\end{corollary}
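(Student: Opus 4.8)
The plan is to read this off directly from Theorem \ref{teotriang}, so the proof is essentially one line of substitution; the only thing that needs checking is the value of the quantity $T$ in the triangle-free case. Recall that Theorem \ref{teotriang} asserts $\Ric(G)\le 2+\tfrac{T}{2}$, where $T=\sup_{\{v,v'\}}t(v,v')$ and $t(v,v')$ counts the triangles containing both $v$ and $v'$.

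First I would observe that if $G$ contains no triangle, then by the definition recalled just before Remark \ref{triang} there is no triple $x,u,v$ with $\{x,u\},\{x,v\},\{u,v\}$ all edges. In particular no triangle can contain any fixed pair $v,v'$, so $t(v,v')=0$ for every pair of vertices. Since $t$ is nonnegative and identically $0$, its supremum is $T=0$; the index set is nonempty because $G$ has no isolated vertices and hence at least one edge. Substituting $T=0$ into the bound of Theorem \ref{teotriang} yields $\Ric(G)\le 2+0=2$, which is the claim.

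I do not expect any real obstacle here: all of the substance is contained in Theorem \ref{teotriang} (quoted from \cite{klart}), and the corollary is simply its specialization to $T=0$. If one wanted a self-contained argument independent of that theorem, one could instead combine the triangle-free expression for $\Gamma_2$ in Remark \ref{triang} with the variational characterization in Lemma \ref{proof3}, choosing a vertex $x$ and a test function $f$ with $f(x)=0$ designed so that the first two sums in the Remark \ref{triang} formula are made as small as possible relative to $\Gamma(f)(x)$; however, this merely retraces the proof of the theorem in a special case and is unnecessary for establishing the corollary.
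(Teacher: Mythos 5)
Your proof is correct and is exactly the argument the paper intends: the corollary is stated as an immediate consequence of Theorem \ref{teotriang}, obtained by noting that triangle-freeness forces $t(v,v')\equiv 0$, hence $T=0$, and substituting into $\Ric(G)\leq 2+\frac{T}{2}$. Nothing further is needed, and your remark that a self-contained argument via Remark \ref{triang} would merely retrace the theorem's proof is also accurate.
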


Knowing the Ricci curvature of a graph allows one to obtain other information about it. We conclude by presenting some results about the spectral gap and the isoperimetry of a graph that depend on a lower bound on the Ricci curvature, and that are used in this work, our references are \cite[Section 3]{klart} and \cite[Section 4]{klart}. To learn more about isoperimetric inequalities for graphs we refer the reader to \cite[Section 4]{isop}.  From now until the end of the section we consider only finite graphs.


 We start by defining the spectral gap, for a more detailed overview of spectral graph theory the reader can refer to \cite{chung}.

\begin{defin}
The adjacency matrix of $G$ is a square matrix of dimension $|\mathcal{V}(G)|\times|\mathcal{V}(G)|$ such that the entry $A_{ij}$ of the matrix is $1$ if there is an edge between vertex $i$ and vertex $j$ of $G$, $A_{ij}$ is $0$ otherwise.

The Laplacian matrix is defined as:
\[
\overline{\Delta}:=-D+A
\]
where $D$ is the diagonal matrix whose entries are the degrees of the vertices and $A$ is the adjacency matrix of the graph.
\end{defin}

\begin{defin}\label{spec}
Let $G$ be a graph and $\overline{\Delta}$ its Laplacian matrix. The spectral gap of $G$, $\lambda_G$ is the least non zero eigenvalue of $-\overline{\Delta}$.
\end{defin}
 Where no ambiguity occurs we shall just denote the spectral gap as $\lambda$.
The following results, proved in \cite{klart}, show how the Ricci curvature of a graph can help bound its spectral gap and how these two values allow an isoperimetric inequality.

\begin{teo}\label{spgap}
Let $G$ be a graph with $\Ric(G)>0$, then $\lambda\geq \Ric(G)$.
\end{teo}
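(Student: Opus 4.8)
The plan is to exploit the summed (global) form of the curvature inequality evaluated against a Laplacian eigenfunction. First I would record that the operator $\Delta$ acting on functions coincides with the Laplacian matrix $\overline{\Delta}$: indeed $(\overline{\Delta}f)(x)=-d(x)f(x)+\sum_{v\in B(1,x)}f(v)=\Delta(f)(x)$. Then I would choose a function $f$ realizing the spectral gap, namely a nonconstant $f$ with $-\overline{\Delta}f=\lambda f$, equivalently $\Delta(f)=-\lambda f$.

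The core of the argument is to sum the defining inequality $\Gamma_2(f)(x)\geq K\Gamma(f)(x)$, with $K=\Ric(G)>0$, over all $x\in\mathcal{V}(G)$ and evaluate both sides on this eigenfunction. Two summation identities drive the computation. First, for any $g$ one has $\sum_{x}\Delta(g)(x)=0$, since $\overline{\Delta}$ is symmetric with $\overline{\Delta}\mathbf{1}=0$; combined with the definition $\Gamma_2(f)=\tfrac12\Delta(\Gamma(f,f))-\Gamma(f,\Delta f)$ this gives $\sum_x\Gamma_2(f)(x)=-\sum_x\Gamma(f,\Delta f)(x)$. Second, a double-counting of edges yields the Dirichlet-form identity $\sum_x\Gamma(f,g)(x)=\sum_{\{x,v\}\in\mathcal{E}(G)}(f(x)-f(v))(g(x)-g(v))=\langle f,-\overline{\Delta}g\rangle$.

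With these in hand both sides collapse to multiples of $\norm{f}^2$. Since $-\overline{\Delta}f=\lambda f$, I would obtain $\sum_x\Gamma(f)(x)=\langle f,-\overline{\Delta}f\rangle=\lambda\norm{f}^2$, and, using $\Delta f=-\lambda f$ together with $\overline{\Delta}(\overline{\Delta}f)=\lambda^2 f$, also $\sum_x\Gamma_2(f)(x)=-\langle f,-\overline{\Delta}(\Delta f)\rangle=\lambda^2\norm{f}^2$. The summed inequality therefore reads $\lambda^2\norm{f}^2\geq K\lambda\norm{f}^2$. Because $\lambda>0$ is a nonzero eigenvalue and $f$ is nonconstant, so $\norm{f}^2>0$, dividing by $\lambda\norm{f}^2$ yields $\lambda\geq K=\Ric(G)$, which is the claim.

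I expect the main obstacle to be getting the two summation identities and their signs exactly right, in particular verifying $\sum_x\Gamma(f,g)(x)=\langle f,-\overline{\Delta}g\rangle$ via the edge double-count, and correctly propagating the eigenvalue through the second-order term $\Gamma(f,\Delta f)$ so that the quadratic factor $\lambda^2$ emerges. Once those identities are pinned down, the conclusion is immediate from the positivity of $\lambda$.
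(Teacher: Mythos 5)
Your proof is correct. Note that the paper does not prove this theorem itself---it quotes it from \cite{klart}---and your argument is essentially the standard discrete Lichnerowicz argument used there: the identities $\sum_x\Delta(g)(x)=0$, $\sum_x\Gamma(f,g)(x)=\langle f,-\overline{\Delta}g\rangle$, and hence $\sum_x\Gamma_2(f)(x)=\norm{\overline{\Delta}f}^2$, evaluated on an eigenfunction of the spectral gap, give $\lambda^2\norm{f}^2\geq \Ric(G)\,\lambda\norm{f}^2$, and dividing by $\lambda\norm{f}^2>0$ (legitimate, since $-\overline{\Delta}$ is positive semidefinite so $\lambda>0$, and $f\neq 0$) yields the claim.
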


\begin{teo}\label{sgiso}
Suppose $G$ has $\Ric(G)\geq K$, for some $K\in \mathbb{R}\setminus \{0\}$. Then, for any subset $A\subset \mathcal{V}(G)$,
\[
|\partial A| \geq \frac{1}{2}min \{\sqrt{\lambda},\frac{\lambda}{\sqrt{2|K|}}\}|A|\left(1-\frac{|A|}{|\mathcal{V}(G)|}\right) .
\]
Here, by $\partial A$, we mean the collection of all edges connecting $A$ to its complement.
\end{teo}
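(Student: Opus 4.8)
The plan is to run the semigroup (Bakry--Émery) method, testing the inequality on indicator functions. First I would reduce to the case $f=\mathbf 1_A$: summing the pointwise formula for $\Gamma$ over all vertices gives $\sum_{x}\Gamma(f)(x)=|\partial A|$, since every edge joining $A$ to its complement contributes $1$ and all other edges contribute $0$, while $\sum_x(f(x)-\bar f)^2=|A|(1-|A|/n)$, where $n=|\mathcal V(G)|$ and $\bar f=|A|/n$ is the mean. Thus, with $g=f-\bar f$, the claim becomes the Buser-type estimate
\[
\sum_x\Gamma(f)(x)\ \geq\ \tfrac12\min\{\sqrt\lambda,\ \lambda/\sqrt{2|K|}\}\,\norm{g}_2^2 .
\]

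Next I would introduce the continuous-time heat semigroup $P_t=e^{t\Delta}$ attached to the Laplacian $\Delta=\overline\Delta$; it is Markovian, so $0\le P_tf\le 1$ when $f=\mathbf 1_A$, it fixes the constants, and it preserves sums. Two inputs drive the argument. On the spectral side, Definition \ref{spec} yields for any mean-zero $g$ the decay $\langle g,P_tg\rangle\le e^{-\lambda t}\norm{g}_2^2$, forcing $P_tf$ to relax toward $\bar f$ at rate $\lambda$. On the curvature side, the hypothesis $\Ric(G)\ge K$ is exactly the Bakry--Émery bound $\Gamma_2(h)\ge K\Gamma(h)$ for every function $h$; differentiating $s\mapsto P_s\big(\Gamma(P_{t-s}f)\big)$ and using that $P_s$ is positivity-preserving turns this into the gradient estimate $\Gamma(P_tf)\le e^{-2Kt}P_t\Gamma(f)$, which bounds how fast the Dirichlet energy of $P_tf$ can dissipate in terms of the perimeter.

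The two inputs are coupled through the heat content of $A$. Since $P_t$ conserves total mass, one checks the identity $\norm{f-P_tf}_1=2\big(\langle f,f\rangle-\langle f,P_tf\rangle\big)$, and writing $\langle f,P_tf\rangle=\bar f^{\,2}n+\langle g,P_tg\rangle$ together with the spectral decay gives the lower bound $\norm{f-P_tf}_1\ge 2(1-e^{-\lambda t})\norm{g}_2^2$. The crux is the matching \emph{upper} bound on the heat content in terms of the perimeter: I would boost the gradient estimate to the $L^1$ form $\norm{f-P_tf}_1\le C\sqrt{(e^{2|K|t}-1)/(2|K|)}\,|\partial A|$, whose square-root dependence on $t$ (rather than the naive linear dependence) is precisely what produces the $\sqrt\lambda$ factor. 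Comparing the two bounds and optimizing over $t>0$ then gives the result: taking $t$ of order $1/\lambda$ when the curvature correction is negligible yields the $\sqrt\lambda$ branch, while the constraint $t\lesssim 1/|K|$ forced by the exponential correction yields the $\lambda/\sqrt{2|K|}$ branch, and the minimum of the two is the stated constant.

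I expect the genuine obstacle to be this last $L^1$ heat-content estimate. The elementary route---bounding $\sum_x\Gamma(P_sf)(x)\le e^{-2Ks}|\partial A|$ and integrating in $s$---only delivers a bound linear in $t$, and hence nothing stronger than the plain Poincaré inequality $\sum_x\Gamma(f)\ge\lambda\norm{g}_2^2$. Extracting the square root requires a smoothing estimate of the form $\norm{\Delta P_sf}_1\lesssim s^{-1/2}|\partial A|$, that is, an $L^1$ control of $\sqrt{\Gamma(P_sf)}$, which is delicate on graphs because the chain rule underlying the continuous $CD(K,\infty)$ theory is unavailable. Establishing this discrete gradient estimate, and tracking the constants so that they collapse to the factor $\tfrac12$, is where the real work lies.
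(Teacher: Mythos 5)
There is no ``paper's own proof'' to compare against here: Theorem \ref{sgiso} is imported from \cite{klart} (the paper states it is ``proved in \cite{klart}''), so the relevant benchmark is the semigroup argument in that reference --- which is exactly the strategy you outline, namely Ledoux's proof of Buser's inequality run on the heat semigroup $P_t=e^{t\overline{\Delta}}$. Your reductions are all sound: $\sum_x\Gamma(\mathbf{1}_A)(x)=|\partial A|$; $\sum_x(\mathbf{1}_A(x)-|A|/n)^2=|A|(1-|A|/n)$; the identity $\norm{f-P_tf}_1=2\bigl(\langle f,f\rangle-\langle f,P_tf\rangle\bigr)$ for $f=\mathbf{1}_A$ (Markov property plus conservation of mass); the resulting spectral lower bound $\norm{f-P_tf}_1\geq 2(1-e^{-\lambda t})\norm{g}_2^2$; and the gradient estimate $\Gamma(P_tf)\leq e^{-2Kt}P_t\Gamma(f)$ via differentiation of $s\mapsto P_s(\Gamma(P_{t-s}f))$.

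The gap you flag is genuine as written --- without a $\sqrt{t}$-type heat-content bound the scheme yields nothing beyond the Poincar\'e inequality --- but you overestimate the obstruction, and the fix is to dualize rather than to control $\norm{\Delta P_sf}_1$ directly. Write $\norm{f-P_tf}_1=\sup_{\norm{g}_\infty\leq 1}\langle g,f-P_tf\rangle$ and move the semigroup onto the test function:
\[
\langle g,f-P_tf\rangle=\int_0^t\sum_{\{x,y\}\in\mathcal{E}(G)}\bigl(P_sg(x)-P_sg(y)\bigr)\bigl(f(x)-f(y)\bigr)\,ds\leq|\partial A|\int_0^t\sqrt{2\,\textstyle\sup_x\Gamma(P_sg)(x)}\,ds,
\]
since $f=\mathbf{1}_A$ kills all edges outside $\partial A$, and $(h(x)-h(y))^2\leq 2\Gamma(h)(x)$ for adjacent $x,y$. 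What is needed is therefore an $L^\infty$ (not $L^1$) gradient bound on $P_sg$, and this requires no chain rule: the interpolation identity $P_s(g^2)-(P_sg)^2=2\int_0^sP_r\bigl(\Gamma(P_{s-r}g)\bigr)\,dr$ uses only $2\Gamma(h)=\Delta(h^2)-2h\Delta h$, which in the discrete setting is the \emph{definition} of $\Gamma$, and inserting your weak gradient estimate in the form $P_r(\Gamma(P_{s-r}g))\geq e^{2Kr}\Gamma(P_sg)$ gives the reverse local Poincar\'e inequality
\[
\Gamma(P_sg)\leq\frac{K}{e^{2Ks}-1}\Bigl(P_s(g^2)-(P_sg)^2\Bigr)\leq\frac{|K|}{1-e^{-2|K|s}}\,\norm{g}_\infty^2
\]
(the coefficient is positive for either sign of $K$). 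Since $1-e^{-u}\geq ue^{-u}$, the integrand above is at most $e^{|K|s}/\sqrt{s}$, whence $\norm{f-P_tf}_1\leq 2\sqrt{t}\,e^{|K|t}\,|\partial A|$; comparing with the spectral lower bound and taking $t$ of order $\min\{1/\lambda,\,1/(2|K|)\}$ produces the two branches $\sqrt{\lambda}$ and $\lambda/\sqrt{2|K|}$, the constant $\tfrac12$ being recovered by evaluating the $s$-integral exactly rather than through the crude bound. In short: the scheme is the right one and is identical to that of the cited source; the single lemma you left open is real, but it closes with the tools already present in your sketch once the estimate is run on the dual side.
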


Notice that for Theorem \ref{spgap} it is crucial to have $\Ric(G)$  positive,  but positivity is not required for Theorem \ref{sgiso}. With a different bound on the spectral gap it is still possible to apply Theorem \ref{sgiso} to families of graphs with a negative lower bound for the Ricci curvature.
Finally, we recall the following result that bounds the spectral gap of any Cayley graph:

\begin{corollary}\label{caysg}
Let $G$ be a finite group and $S\subset G$ be a set of generators. We denote by $Cay(S)$ the Cayley graph of $G$ defined by taking $S$ as set of generators. Then:
\[
\lambda_{Cay(S)}\geq \frac{|G|}{d|S|^d}.
\]
Where $d$ is the diameter of $Cay(S)$.
\end{corollary}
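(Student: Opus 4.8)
The plan is to bound $\lambda:=\lambda_{Cay(S)}$ from below by combining the variational description of the spectral gap with a Poincar\'e (canonical-path) inequality that exploits the group structure; write $\mathcal{E}$ for the edge set of $Cay(S)$ and $o$ for the identity element of $G$. Since $S$ generates $G$ the graph is connected, and being a Cayley graph it is regular of degree $|S|$, so $-\overline{\Delta}=|S|\,\mathrm{Id}-A$ and its quadratic form is $\sum_{\{x,y\}\in\mathcal{E}}(f(x)-f(y))^2$. Connectedness makes $0$ a simple eigenvalue of $-\overline{\Delta}$ with eigenvector the constant function, so by Courant--Fischer
\[
\lambda=\min\Big\{\frac{\sum_{\{x,y\}\in\mathcal{E}}(f(x)-f(y))^2}{\sum_{x\in G}f(x)^2}\ :\ \sum_{x\in G}f(x)=0,\ f\neq 0\Big\}.
\]
The task is thus a lower bound on the Dirichlet energy in terms of $\sum_x f(x)^2$ for mean-zero $f$.

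First I would pass from adjacent pairs to all pairs. For any $f$ with $\sum_{x\in G}f(x)=0$ the elementary identity
\[
\sum_{u,v\in G}\big(f(u)-f(v)\big)^2=2|G|\sum_{x\in G}f(x)^2
\]
holds. For each ordered pair $(u,v)$ fix a geodesic $\gamma_{u,v}$ in $Cay(S)$, necessarily of length $\delta(u,v)\le d$; telescoping $f$ along $\gamma_{u,v}$ and applying Cauchy--Schwarz gives $(f(u)-f(v))^2\le d\sum_{\{x,y\}\in\gamma_{u,v}}(f(x)-f(y))^2$. Summing over all pairs and interchanging summations yields
\[
2|G|\sum_{x\in G}f(x)^2\ \le\ d\,\Big(\max_{e\in\mathcal{E}}c(e)\Big)\sum_{\{x,y\}\in\mathcal{E}}(f(x)-f(y))^2,
\]
where $c(e):=\#\{(u,v):e\in\gamma_{u,v}\}$ counts the chosen geodesics crossing $e$. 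With the variational formula this gives $\lambda\ge 2|G|\big/\big(d\max_e c(e)\big)$, so everything reduces to a congestion estimate.

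To bound $\max_e c(e)$ I would use left multiplication by $G$, which acts on $Cay(S)$ by automorphisms, transitively on vertices and transitively on the edges lying in any fixed generator direction $s\in S$. Choosing one geodesic word per group element and transporting it by left translations produces a $G$-invariant family $\{\gamma_{u,v}\}$, for which $c(e)$ depends only on the direction $s$ of $e$. A geodesic $\gamma_{u,v}$ then crosses such an edge precisely when its geodesic word factors as (a geodesic word from $u$ to one endpoint)$\cdot s\cdot$(a geodesic word from the other endpoint to $v$), so $c(e)$ is at most the number of pairs $(p,q)\in G\times G$ with $\delta(o,p)+\delta(o,q)\le d-1$. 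Since each sphere obeys $|B(k,o)|\le|S|^{k}$, this count is at most $\sum_{i+j\le d-1}|B(i,o)|\,|B(j,o)|\le\sum_{i+j\le d-1}|S|^{\,i+j}$, a geometric sum of order $|S|^{d}$. Inserting $\max_e c(e)=O(|S|^{d})$ into $\lambda\ge 2|G|/(d\max_e c(e))$ produces a bound of the stated form $\lambda\ge|G|/(d|S|^{d})$.

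The variational principle, the mean-zero identity and the Cauchy--Schwarz step are routine; the real obstacle is the congestion estimate. The naive bound controls $c(e)$ only through the total geodesic length $\sum_{u,v}\delta(u,v)$, which is far too large; the purpose of the translation symmetry is exactly to make the congestion constant along each generator direction, so that it is governed by the ball volumes $|B(k,o)|\le|S|^{k}$ rather than by $|G|$. Carrying out this reduction carefully, and in particular summing the geometric series $\sum_k|S|^{k}$ so as to isolate the factor $|S|^{d}$ with the correct constant, is where the main work of the proof lies.
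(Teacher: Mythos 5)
The paper itself does not prove this corollary; it is quoted verbatim from the reference \cite{spgap}, so your proposal has to be judged on its own merits rather than against an in-paper argument. Your framework (the variational characterization of $\lambda$, the mean-zero identity $\sum_{u,v}(f(u)-f(v))^2=2|G|\sum_x f(x)^2$, Cauchy--Schwarz along geodesics, and left-translation-invariant canonical paths) is sound and standard. The genuine gap is exactly the congestion estimate, the step you describe as "the main work": your count gives
\begin{equation*}
\max_e c(e)\ \le\ \sum_{i+j\le d-1}|S|^{i+j}\ =\ \sum_{m=0}^{d-1}(m+1)\,|S|^m ,
\end{equation*}
and this arithmetico-geometric sum is \emph{not} $O(|S|^d)$: the slice $i+j=d-1$ alone contributes $d\,|S|^{d-1}$, so the sum is of order $(d/|S|)\,|S|^d$, with the factor $d/|S|$ unbounded. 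Feeding your own estimate into $\lambda\ge 2|G|/\bigl(d\max_e c(e)\bigr)$ yields at best something like $\lambda\ge 2|S|\,|G|/\bigl(d^2|S|^d\bigr)$, which implies the stated inequality only when $d\le 2|S|$. As soon as $d$ exceeds a constant multiple of $|S|$ --- e.g.\ $G=\mathbb{Z}_n$ with $S=\{\pm1\}$ and $d=\lfloor n/2\rfloor$, or any large-diameter Cayley graph of polynomial growth --- the bound you actually derive is strictly weaker than the claim, so the proof is incomplete precisely in the regime glossed over by the $O(\cdot)$. Two further bookkeeping points compound this: the statement carries the explicit constant $1$, so no big-$O$ argument can finish; and since your mean-zero identity runs over \emph{ordered} pairs, $c(e)$ must count ordered pairs (crossings of $e$ in both directions), costing another factor $2$.

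The missing idea is a volume estimate sharp enough to make the congestion genuinely geometric. Geodesic words are reduced, so the spheres satisfy $|B(k,o)|\le|S|\,(|S|-1)^{k-1}$ rather than merely $|S|^k$; with this, $|B(i,o)|/|S|^i$ decays geometrically in $i$, the sum $\sum_i |B(i,o)|\cdot\bigl|\{q:\delta(o,q)\le d-1-i\}\bigr|$ no longer picks up a clean factor of $d$, and what survives is controlled because $d\bigl(\tfrac{|S|-1}{|S|}\bigr)^{d}$ is bounded uniformly in $d$. That is how one isolates $|S|^d$ instead of $d\,|S|^{d-1}$. Even then the constants are delicate (small $|S|$ and small $d$ need separate treatment to land on the constant $1$ in the statement), which confirms that this step is the actual content of the theorem and cannot be dispatched as a routine geometric summation.
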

This previous result is proved in \cite{spgap}.

\subsection{Matrices and eigenvalues}

In Section \ref{subsec} we need to study the maximum eigenvalue of some matrices. For this purpose the results that are presented in this subsection are of great usefullness. First we recall a result that bounds the eigenvalues of any matrix:

\begin{teo}[Gershgorin]\label{gersh}
Let $M=(m_{i,j})$ be a matrix in $\mathbb{R}^{n\times n}$, let $R_i=\sum_{j=1,j\neq i}^{n}|m_{i,j}|$ and
\[
K_i:=\{z\in \mathbb{C}||z-m_{i,i}|\leq R_i\}.
\]
Then all the eigenvalues of $M$ are contained in $\cup_{i=1}^{n}K_i$. The $K_i$ are called Gershgorin circles. 
\end{teo}

A proof of Gershgorin's Theorem can be found, e.g., in \cite[Subsection 7.2.1]{gol} (see also \cite{gers}).

We now recall some formulas for the eigenvalues of some particular families of matrices.

Let us consider the following matrix:
\[
M(\alpha,\beta,a,b,c)=
\begin{bmatrix}
-\alpha+b&c&&&\\
a&b&\ddots&\textrm{\Large{0}}&\\
&\ddots&\ddots&\ddots&\\
&\textrm{\Large{0}}&\ddots& b &c \\
&&&a&-\beta+b\\
\end{bmatrix}
\quad \forall (\alpha,\beta,a,b,c)\in \mathbb{R}^5.
\]
Its eigenvalues can be computed using the next result first proved in \cite[Theorem 4]{wen}:
\begin{teo}
\label{teowen}
Suppose that $ac>0$ and that $\alpha=\beta=\sqrt{ac}\neq 0$, then the eigenvalues $\lambda_1,\ldots,\lambda_n$ of $M(\alpha,\beta,a,b,c)$ are given by
\[
\lambda_i=b+2\sqrt{\alpha\beta}\;\cos(\frac{i\pi}{n}), \quad i=1,\ldots,n.
\]
\end{teo}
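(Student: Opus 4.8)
The plan is to reduce $M$ to a symmetric matrix by a diagonal similarity and then diagonalize that matrix explicitly through an eigenvector ansatz.

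First, since $ac>0$, set $s=\sqrt{ac}$ and $r=\sqrt{a/c}>0$, and conjugate $M$ by the real invertible diagonal matrix $D=\operatorname{diag}(r,r^2,\dots,r^n)$. A direct check shows that $D^{-1}M(\alpha,\beta,a,b,c)D$ leaves the diagonal unchanged while replacing the super- and sub-diagonal entries $c$ and $a$ both by $s$. Since conjugation preserves the spectrum, it suffices to compute the eigenvalues of the symmetric tridiagonal matrix $\tilde M$ with diagonal $(b-s,b,\dots,b,b-s)$ (using $\alpha=\beta=s$) and constant off-diagonal $s$. Writing $\tilde M=bI+s\,P$, where $P$ is the symmetric matrix with $-1$ in the two diagonal corners, $0$ on the rest of the diagonal and $1$ on both off-diagonals, reduces the problem to finding $\Spec(P)$: the eigenvalues of $M$ are then $b+s\mu$ for $\mu\in\Spec(P)$, and $s=\sqrt{ac}=\sqrt{\alpha\beta}$.

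Next I would solve $Pv=\mu v$ directly. For an interior index the equation reads $v_{j-1}+v_{j+1}=\mu v_j$, while the two boundary rows read $-v_1+v_2=\mu v_1$ and $v_{n-1}-v_n=\mu v_n$. Introducing ghost entries $v_0,v_{n+1}$, the boundary rows become the same interior recurrence provided $v_0=-v_1$ and $v_{n+1}=-v_n$. Setting $\mu=2\cos\theta$ makes the recurrence $v_{j+1}=2\cos\theta\,v_j-v_{j-1}$, whose solutions are spanned by $\cos(j\theta)$ and $\sin(j\theta)$; the ansatz $v_j=\sin\!\big((j-\tfrac12)\theta\big)$ satisfies $v_0=-v_1$ automatically. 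The remaining condition $v_{n+1}=-v_n$ becomes $\sin((n+\tfrac12)\theta)+\sin((n-\tfrac12)\theta)=2\sin(n\theta)\cos(\tfrac\theta2)=0$, which for $\theta\in(0,\pi]$ forces $\theta=k\pi/n$ with $k=1,\dots,n$.

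Finally I would check that these parameters produce exactly the spectrum. For $k=1,\dots,n$ the values $\cos(k\pi/n)$ are distinct, since cosine is strictly decreasing on $(0,\pi]$, and the corresponding vectors are nonzero (for instance $v_1=\sin(k\pi/2n)\neq0$ when $k<n$, while $v_j=(-1)^{j+1}$ when $k=n$), so $P$ has the $n$ distinct eigenvalues $2\cos(k\pi/n)$, which exhausts its spectrum. Hence the eigenvalues of $M$ are $\lambda_i=b+2\sqrt{\alpha\beta}\cos(i\pi/n)$ for $i=1,\dots,n$. The only delicate points are choosing the half-integer phase in the ansatz so that the symmetric corner perturbation (the $-1$'s) is matched, and confirming that the endpoint $\theta=\pi$ (i.e. $i=n$, $\mu=-2$) is a genuine eigenvalue rather than a spurious root; I expect the bookkeeping at these two boundaries to be the main thing to get right.
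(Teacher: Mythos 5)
Your argument cannot be compared against an internal proof, because the paper does not prove Theorem \ref{teowen} at all: it imports the statement from \cite[Theorem 4]{wen}. So what you have written is a self-contained replacement for that citation, and its structure is the standard one for corner-perturbed Jacobi matrices: symmetrize by a diagonal similarity, strip off $bI$, and diagonalize the resulting matrix $P$ (corners $-1$, off-diagonals $1$) with the phase-shifted ansatz $v_j=\sin\bigl((j-\tfrac12)\theta\bigr)$. Your boundary bookkeeping is correct: $v_0=-v_1$ holds identically for this ansatz, the second boundary condition is exactly $2\sin(n\theta)\cos(\theta/2)=0$, the endpoint $\theta=\pi$ does produce the genuine eigenvector $v_j=(-1)^{j+1}$ (not a spurious root), and exhibiting $n$ distinct eigenvalues $2\cos(k\pi/n)$, $k=1,\dots,n$, of an $n\times n$ matrix closes the argument. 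This gives the paper something it currently lacks, namely an elementary proof of the one case ($\alpha=\beta=\sqrt{ac}$) it actually uses.

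There is, however, one genuine (though easily repaired) slip in your first step. The hypothesis $ac>0$ also allows $a,c<0$, and in that case the ``direct check'' fails as stated: with $r=\sqrt{a/c}>0$ the conjugated off-diagonal entries are $cr=\operatorname{sign}(c)\sqrt{ac}$ and $a/r=\operatorname{sign}(a)\sqrt{ac}$, so for $a,c<0$ both become $-s$ rather than $s$, and $D^{-1}MD=bI-sP'$ with $P'$ having corners $-1$ but off-diagonals $-1$. The repair is one line: conjugating further by $\operatorname{diag}\bigl(1,-1,1,\dots,(-1)^{n+1}\bigr)$ flips the sign of every off-diagonal entry of a tridiagonal matrix while fixing its diagonal, so this matrix is similar to $\tilde M=bI+sP$ after all (equivalently, choose $r=-\sqrt{a/c}$ when $a,c<0$). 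With that sentence added, your proof is complete over the full range of the hypothesis.
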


Finally we consider one last family of matrices:
\[
M(c_0,\ldots,c_{n-1})=
\begin{bmatrix}
c_0    &c_{n-1}&\ldots & c_2  & c_1   \\
c_1    &c_0    &c_{n-1}&      & c_2   \\
\vdots &c_1    &   c_0 &\ddots&\vdots \\
c_{n-2}&       &\ddots &\ddots&c_{n-1}\\
c_{n-1}&c_{n-2}&\ldots &c_1   &c_0    \\
\end{bmatrix}
\quad (c_0,\ldots,c_{n-1})\in \mathbb{R}^{n}.
\]
These matrices are such that each descending diagonal from left to right is constant (matrices with this property are called Toeplitz matrices, see e.g. \cite{gray}). Furthermore the values on the diagonals are repeated as pictured, in this case we call these circulant Toeplitz matrices. The following theorem (\cite[Section 3.1]{gray}) holds:
\begin{teo}\label{circ}
The eigenvalues $\lambda_{1},\ldots,\lambda_{n}$ of $M(c_0,\ldots,c_{n-1})$ are given by
\[
\lambda _{j}=c_{0}+c_{n-1}\zeta^{j}+c_{n-2}\zeta^{2j}+\ldots +c_{1}\zeta^{(n-1)j},\qquad j=0,1,\ldots ,n-1. 
\]
Where $\zeta=e^{\frac{2i\pi }{n}}$.
\end{teo}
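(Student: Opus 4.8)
The plan is to prove the eigenvalue formula by exhibiting a complete set of explicit eigenvectors built from the $n$-th roots of unity, so that no characteristic-polynomial computation is ever needed. First I would record the structural description of the matrix: indexing rows and columns from $0$ to $n-1$, the circulant pattern shown means precisely that the $(k,l)$ entry equals $c_{(k-l)\bmod n}$. This single identity captures the entire matrix and is the only fact about $M(c_0,\ldots,c_{n-1})$ that the computation will use.

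Next I would propose, for each $j\in\{0,1,\ldots,n-1\}$, the candidate eigenvector $v_j=(1,\zeta^{j},\zeta^{2j},\ldots,\zeta^{(n-1)j})^{\mathsf{T}}$, whose $l$-th coordinate is $\zeta^{jl}$. Applying $M$ and using the entry formula gives
\[
(Mv_j)_k=\sum_{l=0}^{n-1}c_{(k-l)\bmod n}\,\zeta^{jl}.
\]
The heart of the argument is the substitution $m=(k-l)\bmod n$: since $\zeta^{n}=1$, the factor $\zeta^{jl}$ may be rewritten as $\zeta^{j(k-m)}$ even though the reduction modulo $n$ can shift the index, and as $l$ runs over a complete residue system so does $m$. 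Factoring out $\zeta^{jk}$ then yields $(Mv_j)_k=\zeta^{jk}\bigl(\sum_{m=0}^{n-1}c_m\zeta^{-jm}\bigr)$, that is $Mv_j=\lambda_j v_j$ with $\lambda_j=\sum_{m=0}^{n-1}c_m\zeta^{-jm}$. Rewriting $\zeta^{-jm}=\zeta^{j(n-m)}$ and relabelling the summation so that the coefficient $c_m$ is paired with $\zeta^{j(n-m)}$ reproduces exactly the stated expression $c_0+c_{n-1}\zeta^{j}+c_{n-2}\zeta^{2j}+\cdots+c_1\zeta^{(n-1)j}$.

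Finally I would argue that these $n$ eigenvectors account for every eigenvalue. The vectors $v_0,\ldots,v_{n-1}$ are the columns of a Vandermonde matrix in the distinct nodes $1,\zeta,\ldots,\zeta^{n-1}$, hence are linearly independent; they therefore form a basis of $\mathbb{C}^n$ on which $M$ acts diagonally, so the numbers $\lambda_0,\ldots,\lambda_{n-1}$ are precisely its eigenvalues, completing the proof.

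The main obstacle is not conceptual but bookkeeping: one must treat the reduction of $k-l$ modulo $n$ with care, verifying that replacing $\zeta^{jl}$ by $\zeta^{j(k-m)}$ is legitimate despite the modular wrap-around (this is exactly where $\zeta^{n}=1$ enters), and then matching the final reindexed sum term-by-term against the ordering of the coefficients in the statement. Getting this alignment right is the only place where an error could realistically creep in.
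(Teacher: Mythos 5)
Your proposal is correct: the entry formula $M_{k,l}=c_{(k-l)\bmod n}$, the root-of-unity eigenvectors $v_j$, the modular reindexing (valid because $\zeta^{n}=1$), and the Vandermonde argument showing $v_0,\ldots,v_{n-1}$ form a basis together give a complete proof of the stated formula. The paper itself offers no proof, citing instead Gray's review of Toeplitz and circulant matrices, and your argument is essentially the standard diagonalization by the discrete Fourier basis found there, so there is nothing further to reconcile.
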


\subsection{Coxeter groups}\label{ssec1}

In this subsection we recall the definition and main facts about Coxeter groups, the main reference is \cite{BB}. Our main goal is to define a family of graphs associated to Coxeter groups, namely the weak order graphs ( denoted $V(W)$).

 Coxeter systems are pairs $(W,S)$ where $W$ is a group generated by the elements in $S$	 and $S=\{s_i\}_{i\in I}$ is a finite set with the following relations:

\[
(s_is_j)^{m_{i,j}}=e.
\] 
The values $m_{i,j}$ are usually seen as the entries of a symmetric matrix with $m_{i,i}=1$ for all $i\in I$ and $m_{i,j}\geq 2$ (including $m_{i,j}=\infty$) for all $i,j\in I, i\neq j$. $W$ is called a Coxeter group, $S$ turns out to be a minimal set of generators, its elements are called Coxeter generators.
All the information about a Coxeter group can be encoded in a labeled graph called the Coxeter graph. Its set of vertices is $S$, and there is an edge between two vertices $s_i$ and $s_j$ if $m_{i,j}\geq 3$, such an edge is labeled with $m_{i,j}$ if $m_{i,j}\geq 4$. 
We state a very classical result from Coxeter Theory, namely the classification of finite Coxeter groups, for more details the reader can see \cite[Appendix A1]{BB} and \cite[Chapter 2]{hump}.
\begin{teo}
Given a finite Coxeter group $W$, this can be written in a unique way as direct product of the following irreducible Coxeter groups:
\[
W=W_1\times\ldots\times W_k
\] 
With $W_i$ of the following kind:
\begin{itemize}
\item $A_n$ $n\geq 1$;
\item $B_n$ $n\geq 2$;
\item $D_n$ $n\geq 3$;
\item $I_2(m)$ $m\geq 2$;
\item $H_3$, $H_4$;
\item $E_6$, $E_7$, $E_8$;
\item $F_4$.
\end{itemize}
\end{teo}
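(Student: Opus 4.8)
The plan is to split the statement into two parts: the direct-product decomposition, which is elementary, and the classification of the irreducible finite factors, which is the substance. The entire argument rests on the canonical geometric representation of a Coxeter system $(W,S)$ together with the criterion that a Coxeter group is finite precisely when an associated symmetric bilinear form is positive definite; once this is in place the problem becomes a combinatorial classification of certain labeled graphs.

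First I would establish the decomposition. Partition $S$ according to the connected components $S_1,\dots,S_k$ of the Coxeter graph. If $s\in S_i$ and $t\in S_j$ with $i\neq j$, then there is no edge between them, i.e.\ $m_{s,t}=2$, so $s$ and $t$ commute. Hence the parabolic subgroups $W_i:=\langle S_i\rangle$ pairwise commute and together generate $W$, and the standard fact that each $\langle S_i\rangle$ is itself a Coxeter system on $S_i$ yields that the product is internal and direct, so
\[
W=W_1\times\cdots\times W_k.
\]
Uniqueness follows because each $W_i$ is irreducible (its graph is connected) and the splitting of a graph into connected components is itself unique. Since a product is finite iff each factor is finite, it remains only to list the \emph{irreducible} finite Coxeter groups.

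Next I would set up the finiteness criterion. On the real vector space $V$ with basis $\{\alpha_s\}_{s\in S}$ define the symmetric bilinear form $B$ by $B(\alpha_s,\alpha_s)=1$ and $B(\alpha_s,\alpha_t)=-\cos(\pi/m_{s,t})$, and let each generator act as the reflection fixing the hyperplane $B(\alpha_s,\cdot)=0$ and sending $\alpha_s\mapsto-\alpha_s$. This representation is faithful and preserves $B$. The key equivalence is that $W$ is finite if and only if $B$ is positive definite: if $W$ is finite it preserves the average of an inner product over its orbit, from which one deduces that $B$ itself must be positive definite; conversely, if $B$ is positive definite then $W$ embeds as a discrete subgroup of the compact group $O(V,B)\cong O(n)$, forcing $W$ to be finite. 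This reduces the whole problem to classifying the connected Coxeter graphs whose Gram matrix $B$ is positive definite.

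The combinatorial classification is where the real work and the main obstacle lie. The crucial monotonicity lemma is that any induced subgraph of a positive-definite Coxeter graph is again positive definite, since its Gram matrix is a principal submatrix of $B$. One then isolates the minimal obstructions: the affine (extended) Coxeter diagrams $\tilde A_n,\tilde B_n,\tilde C_n,\tilde D_n,\tilde E_6,\tilde E_7,\tilde E_8,\tilde F_4,\tilde G_2$, together with any edge labeled $\infty$, each of which has $\det B=0$ (positive semidefinite but not definite). A case analysis on the shape of a connected graph---ruling out cycles (which contain $\tilde A_n$), vertices of degree $\geq 4$ or two distinct branch vertices (which contain $\tilde D_n$ or an $\tilde E$-type diagram), and forbidden label patterns on long chains (which contain $\tilde G_2,\tilde F_4$ or $\tilde B_n$)---shows that any connected graph not isomorphic to one on the stated list contains a forbidden affine subdiagram, hence fails to be positive definite. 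Conversely one verifies positive definiteness of each diagram $A_n,B_n,D_n,I_2(m),H_3,H_4,E_6,E_7,E_8,F_4$ by checking $\det B>0$, most efficiently through a recursion expanding the determinant along a leaf of the graph. The bulk of the difficulty is precisely this exhaustive but elementary enumeration of admissible graph shapes and edge labels; as the result is entirely classical, in the paper I would simply cite \cite[Appendix A1]{BB} and \cite[Chapter 2]{hump} rather than reproduce the case analysis.
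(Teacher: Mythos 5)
The paper does not actually prove this statement: it is quoted as the classical classification theorem, and the proof is deferred to \cite[Appendix A1]{BB} and \cite[Chapter 2]{hump}. So there is no internal proof to compare yours against; your proposal reconstructs the standard argument from exactly those references, and its overall architecture is the right one --- reduction to connected components of the Coxeter graph for the direct-product decomposition and its uniqueness (uniqueness here being relative to the system $(W,S)$), the canonical bilinear form $B(\alpha_s,\alpha_t)=-\cos(\pi/m_{s,t})$, the equivalence ``$W$ finite $\Leftrightarrow$ $B$ positive definite'', and then the combinatorial classification of connected positive definite diagrams.

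One step of your sketch is, however, genuinely wrong as stated: it is not true that every connected diagram outside the list contains an affine subdiagram $\tilde A_n,\ldots,\tilde G_2$ or an $\infty$-labeled edge. The affine diagrams are the minimal obstructions only in the \emph{crystallographic} range of labels $\{2,3,4,6\}$. Once labels $5$ or $\geq 7$ occur, there are connected, non-positive-definite diagrams containing no affine subdiagram and all of whose proper induced subdiagrams are positive definite: for instance the three-vertex chain with both edges labeled $5$, whose Gram determinant is $1-2\cos^2(\pi/5)<0$; the three-vertex chain with labels $7$ and $3$, with determinant $\tfrac34-\cos^2(\pi/7)<0$; or the five-vertex chain with labels $5,3,3,3$ (the would-be $H_5$). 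In each case the proper subdiagrams are of type $A$, $I_2(m)$, $H_3$ or $H_4$, hence positive definite, so your reduction would never detect these. The classical case analysis therefore requires either an enlarged list of minimal non-definite diagrams including the non-crystallographic ones (as in Bourbaki or \cite{hump}), or direct numerical estimates handling chains that carry a label $\geq 5$. Since you ultimately delegate the exhaustive enumeration to \cite{hump} and \cite{BB}, the proposal as a citation-backed outline survives, but the specific reduction you describe --- ``not on the list $\Rightarrow$ contains an affine subdiagram'' --- is false and would have to be repaired before the sketch could be expanded into a complete proof.
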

\begin{obs}
Groups of type $A_n$ are symmetric groups, in particular $A_{n}=S_{n+1}$. The groups of type $B_n$, called hyperoctahedral groups, are the groups of permutations $\pi$ of the set $\{\pm 1,\ldots,\pm n\}$ such that $\pi(a)=-\pi(a)$. Finally $D_n$ is the subgroup of $B_n$ of permutations such that an even number of positive elements has negative image. $D_n$ is called even hyperoctahedral group.
\end{obs}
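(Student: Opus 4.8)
The three assertions are genuine isomorphism statements, so the plan is, in each case, to build a group homomorphism out of the abstract Coxeter presentation of $A_n$, $B_n$, $D_n$, to show it is surjective onto the claimed permutation group, and finally to argue it is injective. First I would fix for each type an explicit assignment of the Coxeter generators. For $A_n$, with generators $s_1,\dots,s_n$ subject to $s_i^2=e$, $(s_is_{i+1})^3=e$ and $(s_is_j)^2=e$ for $|i-j|\geq 2$, I send $s_i\mapsto (i,i+1)\in S_{n+1}$. For $B_n$, with the extra relation $(s_0s_1)^4=e$, I send $s_0$ to the sign change $1\mapsto -1$ (fixing the other coordinates and extended by $\pi(-a)=-\pi(a)$) and $s_i\mapsto (i,i+1)(-i,-i+1)$ for $i\geq 1$. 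For $D_n$ I keep the transposition generators and replace the sign change by the reflection in a root of the form $e_i+e_j$, i.e. the involution swapping $1\leftrightarrow -2$ and $-1\leftrightarrow 2$, which is the node branching off the chain of transpositions.

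The first technical step is to verify that these images satisfy exactly the defining Coxeter relations: each image is an involution, and the order of each product $s_is_j$ matches the prescribed $m_{i,j}$, so that for instance in $B_n$ the sign change $s_0$ and the first transposition $s_1$ generate a dihedral group of order $8$. By the universal property of a group presentation this produces, for each type, a homomorphism $\phi$ from the Coxeter group onto the subgroup generated by the images. I would then check surjectivity by short direct computations: adjacent transpositions generate $S_{n+1}$; adjacent transpositions together with a single sign change generate all signed permutations; and the $D_n$ generators generate precisely the index-two subgroup of signed permutations with an even number of sign changes.

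The crux, and the step I expect to be the main obstacle, is injectivity, equivalently showing that the listed relations already define the concrete group so that $\phi$ is not a proper quotient. I would handle this not by ad hoc relation-chasing but by invoking the geometric theory of reflection groups: each target group is realized as the reflection group of the corresponding root system (the vectors $e_i-e_j$ for $A_n$; the $\pm e_i$ and $\pm e_i\pm e_j$ for $B_n$; the $\pm e_i\pm e_j$ for $D_n$), with the chosen generators acting as the reflections in a set of simple roots whose mutual angles reproduce the labels $m_{i,j}$. The general theorem that a finite reflection group is a Coxeter group on its simple reflections then gives that the abstract Coxeter group of the given type is isomorphic to the reflection group, hence to the permutation group described above.

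Finally, I would supply an independent cross-check through order bookkeeping, namely $|S_{n+1}|=(n+1)!$, while the hyperoctahedral group has order $2^n n!$ and the even hyperoctahedral group has order $2^{n-1}n!$; matching these against the known orders of the Weyl groups $A_n$, $B_n$, $D_n$ confirms that $\phi$ is a bijection. I would close by noting that $D_n$ sits inside $B_n$ as the kernel of the homomorphism recording the parity of the number of sign changes, which is exactly the ``even hyperoctahedral'' description in the statement.
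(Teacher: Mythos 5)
The paper offers no proof of this Remark at all: it is stated as a record of the classical combinatorial models for types $A_n$, $B_n$, $D_n$, with the surrounding text deferring to \cite{BB} and \cite{hump}. Your proposal, by contrast, supplies an actual proof, and it is essentially correct and entirely standard: the generator assignments are the usual ones (adjacent transpositions for $A_n$; additionally a one-coordinate sign change for $B_n$; for $D_n$ the reflection in $e_1+e_2$ as the fork node of the diagram), the relation checks and surjectivity claims are routine and true, and injectivity is correctly delegated to the theorem that a finite reflection group is presented as a Coxeter group on its simple reflections, applied to the root systems $\{e_i-e_j\}$, $\{\pm e_i,\ \pm e_i\pm e_j\}$, $\{\pm e_i\pm e_j\}$. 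This is exactly the textbook route (it is the argument behind the references the paper cites), so your write-up proves what the paper merely records. Two small points. First, the remark as printed contains a typo: the defining condition of a signed permutation should read $\pi(-a)=-\pi(a)$, not $\pi(a)=-\pi(a)$; your construction implicitly corrects this. Second, your closing ``independent cross-check'' via orders is not genuinely independent: to conclude anything from $|S_{n+1}|=(n+1)!$, $2^n n!$, $2^{n-1}n!$ you would need to know the order of the abstract Coxeter group beforehand, and that order is itself computed through the very reflection realization you already invoked; it is harmless as a sanity check but carries no extra logical weight. Neither point affects correctness.
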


We define strictly linear Coxeter systems following \cite{marietti}:
\begin{defin}
Given a Coxeter system $(W,\{s_1,\ldots,s_n\})$, this is called strictly linear if:
\begin{itemize}
    \item$m_{i,j} \geq 3$ if $|i-j|=1$;
    \item$m_{i,j}=2$ if $1<|i-j|$;
\end{itemize}
\end{defin}
Notice that these groups are the ones whose Coxeter graph is a path.

We continue with some classical definitions in Coxeter theory. 
Given an element $w$ in $W$, this can be written as a product of elements in $S$
\[
w=s_1\ldots s_k. 
\]
If $k$ is the minimal length of all the possible expressions for $w$, we say that $k$ is the length of $w$ and we write $\ell(w)=k$. We define in $W$ the set of reflections as the union of all the conjugates of $S$, $T:=\cup_{w\in W}wSw^{-1}$. The definitions of length and reflections allow us to define two partial orders on the set $W$:

\begin{defin}
Given $w\in W$ and $t\in T$, if $w'=tw$ and
$\ell(w')<\ell(w)$ we write $w'\rightarrow w$.
 Given two elements $v,w\in W$ we say that 
 $v \geq w$ according to the Bruhat order if there are 
 $w_0,\ldots, w_k\in W$ such that
\[
v=w_0\leftarrow w_1\ldots w_{k-1}\leftarrow w_k=w.
\]
\end{defin}
Thus we obtain a first partial order on $W$. The undirected Bruhat graph associated to a Coxeter group, denoted $B(W)$, is the graph whose set of vertices is $W$ and such that there is an edge between two vertices $w,v$ if and only if $w\rightarrow v$ or $v\rightarrow w$.

The second order we define on the elements of a Coxeter group is the weak order on $(W,S)$:
\begin{defin}
 Given $u,v\in W$, we say that $u\leq_{R} w$ if $w=us_1,\ldots s_k$, for some $s_i\in S$ such that $\ell(us_1\ldots s_i)=\ell(u)+i$ for every $1\leq i\leq k$.
\end{defin}
This is the definition of the right weak order on $W$, analogously one defines the left weak order, multiplying the $s_1,\ldots s_k$ on the left. These two orders are isomorphic via the map $w\rightarrow w^{-1}$.

Given a Coxeter group $(W,S)$ we denote by $V(W)$ the Hasse graph associated to the weak order, we call it simply 'weak order graph'. This has set of vertices $W$, two elements in the graph are connected by an (undirected) edge if one covers the other according to the weak order.

\section{Eigenvalues and discrete Ricci curvature}\label{subsec}

What follows can be seen as a sequel of Subsection \ref{radici}, after introducing Ricci curvature for graphs here we present some new results on its computation.  The main result is \ref{teomatrix} which describes a method to recover the local Ricci curvature of a graph by computing the minimal eigenvalue of a given matrix. Among the consequences of Theorem \ref{teomatrix} we find some inequalities to bound the value of discrete Ricci curvature for graphs which satisfy some conditions. These conditions are, as seen on Theorem \ref{teotriang}, about the number of triangles of the graph or, similarly, on the number of quadrilaterals.

First we notice that graph automorphisms respect the Ricci curvature: 
\begin{lema}\label{stesso}
If $G$ is any locally finite graph and $\chi$ is a graph automorphism then the following holds:
$$
Ric(G)_x=Ric(G)_{\chi(x)}.
$$
\end{lema}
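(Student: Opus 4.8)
The plan is to show that a graph automorphism $\chi$ induces an isomorphism between the length-2 path subgraph of $x$ and that of $\chi(x)$, and then invoke Remark \ref{obs2}, which states that vertices with isomorphic length-2 path subgraphs have equal local Ricci curvature. This reduces the whole statement to a purely combinatorial observation about how automorphisms interact with the metric structure of $G$.

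First I would recall that a graph automorphism $\chi:\mathcal{V}(G)\to\mathcal{V}(G)$ is a bijection preserving adjacency, i.e. $\{u,v\}\in\mathcal{E}(G)$ if and only if $\{\chi(u),\chi(v)\}\in\mathcal{E}(G)$. The key elementary fact is that such a map preserves graph distance: $\delta(u,v)=\delta(\chi(u),\chi(v))$ for all $u,v$. This follows because $\chi$ maps any path between $u$ and $v$ to a path of the same length between $\chi(u)$ and $\chi(v)$, and the same argument applied to $\chi^{-1}$ (which is also an automorphism) gives the reverse inequality. In particular $\chi$ restricts to bijections $B(i,x)\to B(i,\chi(x))$ for every $i\in\mathbb{N}$, so that $B(1,x)\cup B(2,x)\cup\{x\}$ is mapped bijectively onto $B(1,\chi(x))\cup B(2,\chi(x))\cup\{\chi(x)\}$.

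Next I would check that $\chi$ carries the length-2 path subgraph of $x$ onto the length-2 path subgraph of $\chi(x)$. Since $\chi$ maps the vertex sets bijectively as just described, and since it preserves edges in both directions, the edges connecting vertices of $B(1,x)$ to vertices of $\{x\}\cup B(1,x)\cup B(2,x)$ are sent precisely to the edges connecting vertices of $B(1,\chi(x))$ to $\{\chi(x)\}\cup B(1,\chi(x))\cup B(2,\chi(x))$. Hence the restriction of $\chi$ is a graph isomorphism between the two length-2 path subgraphs. Applying Remark \ref{obs2} then yields $\mathrm{Ric}(G)_x=\mathrm{Ric}(G)_{\chi(x)}$, completing the proof.

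I do not expect a serious obstacle here; the statement is essentially a formal consequence of the locality of $\mathrm{Ric}(G)_x$ already recorded in Remark \ref{obs2}. The only point requiring a little care is the distance-preservation step, and even there the argument is the standard two-sided path-lifting inequality. Alternatively, one could give a self-contained proof by pulling back functions: for any $f$ with $f(\chi(x))=0$, set $g:=f\circ\chi$, observe $g(x)=0$, and verify directly from the definitions of $\Gamma$ and $\Gamma_2$ that $\Gamma(g)(x)=\Gamma(f)(\chi(x))$ and $\Gamma_2(g)(x)=\Gamma_2(f)(\chi(x))$, so the infima defining the two local curvatures coincide; but the route through Remark \ref{obs2} is cleaner and avoids rewriting the operator formulas.
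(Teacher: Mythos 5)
Your proof is correct and takes essentially the same route as the paper: the paper's proof of Lemma \ref{stesso} is the single line ``Trivial from Remark \ref{obs2}'', and your argument is precisely the careful elaboration of that remark (automorphisms preserve distances, hence map length-2 path subgraphs isomorphically onto each other). Nothing further is needed.
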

\begin{proof}
Trivial from Remark \ref{obs2}.
\end{proof}

Now we introduce our main result about the computation of discrete Ricci curvature:

We fix $x\in\mathcal{V}(G)$, for any $v\in B(1,x)$ we denote by $\mathcal{U}_v:=B(2,x)\cap B(1,v)$ and by
$t_v$ the number of triangles containing vertices $v$ and $x$, so $t_v=|\{v'\in B(1,x)|\{v,v'\}\in \mathcal{E}(G)\}|$ . For any $u\in B(2,x)$ we denote by $n_u$ the cardinality of $B(1,u)\cap B(1,x)$, this is  the number of paths of length $2$ joining $x$ and $u$. We define a function $T:B(1,x)\times B(1,x)\rightarrow \{0,1\}$ as follows:
\[
T(v,v')= \begin{cases} 1 &\mbox{if } \{v,v'\}\in \mathcal{E}(G) \\ 
0 &\mbox{ otherwise }  \end{cases}.
\]

\begin{defin}
Given a locally finite graph $G$ and a vertex $x$ of $G$. Assume that $d(x)=|B(1,x)|=d$, and let us define an order on the elements adjacent to $x$ that makes $B(1,x)=\{v_1,\ldots,v_d\}$. We define the following matrix associated to $x$:
\[
A_{ij}(x) = \begin{cases} \sum_{u\in\mathcal{U}_{v_i}}\frac{2(n_u-1)}{n_u}+1+\frac{4-d(x)-d(v_i)}{2}+\frac{3}{2}t_{v_i} &\mbox{if } i=j \\ 
\sum_{u\in \mathcal{U}_{v_i}\cap\mathcal{U}_{v_j}}(-\frac{2}{n_u})+1+2T(v_i,v_j) & \mbox{if } i\neq j \end{cases},
\]
for all $i,j\in\{1,\ldots,n\}$.
\end{defin}
An example of the construction of such a matrix can be found in Example \ref{esempio3}.

\begin{teo}\label{teomatrix}
Given a locally finite graph $G$ and $x$ a vertex of $G$, then

\[
\Ric(G)_x=\min \{\lambda|\lambda \text{ is an eigenvalue of }A(x)\}.
\]
As a consequence
\[
\Ric(G)=\inf\{\lambda|\lambda \text{ is an eigenvalue of }A(x),x\in \mathcal{V}(G)\}.
\]

\end{teo}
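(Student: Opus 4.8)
The plan is to turn the variational problem of Lemma \ref{proof3} into a finite-dimensional Rayleigh quotient. By Lemma \ref{proof3} we have $\Ric(G)_x = \inf_f \Gamma_2(f)(x)/\Gamma(f)(x)$ over real functions with $f(x)=0$ and $\Gamma(f)(x)>0$, and by Remark \ref{obs2} both $\Gamma(f)(x)$ and $\Gamma_2(f)(x)$ depend only on the values of $f$ on the length-$2$ path subgraph of $x$. Since $f(x)=0$ is fixed, the free data are the numbers $a_i := f(v_i)$ for $v_i \in B(1,x) = \{v_1,\dots,v_d\}$ and $b_u := f(u)$ for $u \in B(2,x)$. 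In these coordinates $\Gamma(f)(x) = \tfrac12\sum_i a_i^2 = \tfrac12\norm{a}^2$, so the problem becomes $\Ric(G)_x = \inf\, 2\Gamma_2 / \norm{a}^2$, where $2\Gamma_2$ is read off from Proposition \ref{proof2} as a quadratic form in the vector $(a,b)$.

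The key structural observation is that the variables $b_u$ occur only in the first (double) sum of Proposition \ref{proof2} and do not enter $\Gamma(f)(x)$ at all. Hence for a fixed $a\neq 0$ the ratio is minimized by minimizing $\Gamma_2$ over $b$, and this minimization decouples over $u \in B(2,x)$: for each $u$ I minimize $\tfrac12\sum_{v\in B(1,u)\cap B(1,x)}(b_u-2a_v)^2$ in the single variable $b_u$. This is an elementary least-squares problem whose minimizer is the average $b_u = \tfrac{2}{n_u}\sum_{v\in B(1,u)\cap B(1,x)} a_v$ and whose minimal value is $2\sum_v a_v^2 - \tfrac{2}{n_u}(\sum_v a_v)^2$ (sums over $v\in B(1,u)\cap B(1,x)$) by the standard least-squares identity. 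Because the $b$-block of the Hessian is diagonal, taking $\min_b$ first is legitimate, and $\inf_f \Gamma_2/\Gamma = \inf_{a\neq 0} Q(a)/\norm{a}^2$, where $Q(a) := \min_b 2\Gamma_2(a,b)$ is now a quadratic form in $a$ alone; this is precisely the partial minimization (Schur complement) of the $(a,b)$-form in the $b$ variables.

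Next I would substitute the optimal $b_u$ back and collect coefficients to identify $Q(a) = a^\top A(x)\, a$ with $A(x)$ the symmetric matrix of the preceding Definition. Summing $2\sum_v a_v^2 - \tfrac{2}{n_u}(\sum_v a_v)^2$ over $u$ produces the diagonal contribution $\sum_{u\in\mathcal{U}_{v_i}} \tfrac{2(n_u-1)}{n_u}$ and the off-diagonal contribution $-\sum_{u\in\mathcal{U}_{v_i}\cap\mathcal{U}_{v_j}} \tfrac{2}{n_u}$; the term $(\sum_v f(v))^2$ contributes the constant $1$ to every entry; the fourth sum contributes the degree term $\tfrac{4-d(x)-d(v_i)}{2}$ to the diagonal; and the third sum, running over the edges inside $B(1,x)$, contributes the triangle terms carried by $t_{v_i}$ and $T(v_i,v_j)$. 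I expect this last contribution to be the main obstacle: one must keep careful track of the factor-of-two bookkeeping between the coefficient of a monomial $a_i a_j$ and the symmetric matrix entry $A_{ij}$ (the monomial coefficient equals $2A_{ij}$), and treat the diagonal and off-diagonal effects of the edges $\{v_i,v_j\}\subset B(1,x)$ consistently. A clean way to pin down these triangle coefficients is to test the assembled form on $K_3$ and $C_4$, where $B(2,x)$ is either empty or edgeless, so that the triangle terms appear in isolation.

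Finally the conclusion is immediate from the variational characterization of eigenvalues. Since $A(x)$ is real symmetric and $\Gamma(f)(x)=\tfrac12\norm{a}^2$, the Courant--Fischer (Rayleigh--Ritz) theorem gives $\inf_{a\neq 0} a^\top A(x) a/\norm{a}^2 = \min\{\lambda : \lambda \text{ is an eigenvalue of } A(x)\}$, which establishes $\Ric(G)_x = \min\{\lambda : \lambda \text{ is an eigenvalue of } A(x)\}$. The global statement then follows at once from the identity $\Ric(G) = \inf_{x\in\mathcal{V}(G)} \Ric(G)_x$ recorded after the local definition, by taking the infimum of the per-vertex minimal eigenvalues over all $x\in\mathcal{V}(G)$.
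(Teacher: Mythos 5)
Your proposal is correct and follows essentially the same route as the paper's own proof: the paper likewise fixes $x$, minimizes over the values $f(u)$ for $u\in B(2,x)$ (completing the square to arrive at the same optimizer $f(u)=\frac{2}{n_u}\sum_{v}f(v)$ as your least-squares step), substitutes back to obtain a ratio of homogeneous quadratic forms in the $d$ values on $B(1,x)$, identifies the numerator's matrix with $A(x)$, and concludes by the Rayleigh-quotient characterization of the smallest eigenvalue together with $\Ric(G)=\inf_{x\in\mathcal{V}(G)}\Ric(G)_x$. The one substantive remark concerns exactly the point you flagged: if you actually run your $K_3$ sanity check, Proposition \ref{proof2} yields the triangle contributions $\frac{5}{2}t_{v_i}$ on the diagonal and $-2T(v_i,v_j)$ off the diagonal (giving $\Ric(K_3)=\frac{5}{2}$, the known value), not the $\frac{3}{2}t_{v_i}$ and $+2T(v_i,v_j)$ appearing in the paper's definition of $A(x)$ --- so your bookkeeping step would catch and correct what appears to be an error in that definition (harmless in the rest of the paper, where only triangle-free graphs are treated).
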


\begin{proof}

Let $u\in B(2,x)$, we notice that
\begin{align*}
\sum_{v\in B(1,x)\cap B(1,u)}(f(u)-2f(v))^2&=
n_uf(u)^2+\sum_{v\in B(1,x)}4f(v)^2-\\& 4f(u)\sum_{v\in B(1,x)\cap B(1,u)}f(v)\\
&=\left(\sqrt{n_u}f(u)-\frac{2}{\sqrt{n}}\sum_{v\in B(1,x)}f(v)\right)^2+\\
&\frac{4}{n_u}\sum_{\{v,v'\}\subset B(1,x)\cap B(1,u)}(f(v)-f(v'))^2\\
  & \geq
\frac{4}{n_u}\sum_{\{v,v'\}\subset B(1,x)\cap B(1,u)}
(f(v)-f(v'))^2.
\end{align*}
As $f$ varies among the functions on $\mathcal{V}(G)$ such that $f(x)=0$ and $\Gamma(f)(x)\neq 0$. The minimal value of $\sum_{v\in B(1,x)\cap B(1,u)}(f(u)-2f(v))^2$ is achieved for those $f$ satisfying $f(u)=\frac{2}{{n_u}}\sum_{v}f(v)$.

Therefore
\[
Ric_x(G)=\inf_{f}\frac{p(x)+(\sum_{v\in B(1,x)}f(v))^2+\sum_{v\in B(1,x)}\frac{(4-d(x)-d(v))}{2}f(v)^2}{\sum_{v\in B(1,x)}f(v)^2}
\]
\[
+\frac{\sum_{v,v'\in B(1,x),\{v,v'\}\in \mathcal{E}(G)}[2(f(v)-f(v'))^2+\frac{1}{2}(f(v)^2+f(v')^2)]}
{(\sum_{v\in B(1,x)f(v)}f(v))^2}.
\]
Where
\[
p(v)=\sum_{u\in B(2,x)}\sum_{v\neq v'\in B(1,u)\cap B(1,x)}\frac{2}{n_u}(f(v)-f(v'))^2.
\]

We denote the $d$ elements of $B(1,x)$ as $v_1,\ldots,v_d$. 

We notice that only the $d$ values $f(v)$, $v\in B(1,x)$ appear in this last formula. We see them as $d$ variables $x_1,\ldots,x_d\in\mathbb{R}$  where $x_i=f(v_i)$ and study the minimum of this rational function.

 Both numerator and denominator are homogeneous polynomials of degree two. Therefore we can assume $\sum_{i=1}^{n}x_i^2=1$. The numerator can be seen as the polynomial associated to a quadratic form, the matrix associated to such a quadratic form is the matrix $A(x)$. This can be easily proved computing the coefficient of $x_ix_j$ in the formula for $\Ric(G)_x$ and see that it coincides with $A(x)_{ij}$.

We obtain
\[
\Ric(G)_x=\inf_{y\in\mathbb{S}^{d-1}}y^tA(x)y.
\]

From the definition of $A(x)$ we notice that it is symmetric therefore all its eigenvalues are real, in particular we can talk about the minimum eigenvalue of $A$ and call it $\lambda_{\min}(x)$. We deduce that $\Ric(G)_x=\lambda_{\min}(x)$ and conclude knowing that
$\Ric(G)=\inf_{x\in\mathcal{V}(G)}\Ric(G)_x$.

\end{proof}

\begin{example}
\label{esempio3}
Let $G$ be the Bruhat graph of $S_3=A_2$
\begin{center}\label{figa}
  \begin{tikzpicture}[scale=1]
     \draw (0,0) node[anchor=north]  {$e$};
     \draw (-1,1) node[anchor=east]  {$s_{12}$};
     \draw (1,1) node[anchor=west]  {$s_{23}$};
     \draw (-1,2) node[anchor=east]  {$s_{23}s_{12}$};
     \draw (1,2) node[anchor=west]  {$s_{12}s_{23}$};
     \draw (0,3) node[anchor=south]  {$s_{12}s_{23}s_{12}$};
\foreach \Point in {(0,0), (1,1),(-1,1),(-1,2), (1,2),(0,3)}{
    \node at \Point {\textbullet};
}
    \draw[thick] (0,0) -- +(-1,1);
    \draw[thick] (0,0) -- +(1,1);
    \draw[thick] (0,0) -- +(0,3);
    \draw[thick] (1,1) -- +(-2,1);
    \draw[thick] (1,1) -- +(0,1);
    \draw[thick] (-1,1) -- +(2,1);
    \draw[thick] (-1,1) -- +(0,1);
    \draw[thick] (-1,2) -- +(1,1);
    \draw[thick] (1,2) -- +(-1,1);
  \end{tikzpicture}
\end{center}
 We fix $x=e$, in this case we have that:
\[
B(1,x)=\{s_{12},s_{23},s_{12}s_{23}s_{12}\}; \quad
B(2,x)=\{s_{23}s_{12},s_{23}s_{12}\} \quad d(x)=3.
\]
 Notice that all the vertices $x$ in $G$ have isomorphic length-2 path subgraphs, therefore $A(x)=A(x')$ for all $x,x'\in\mathcal{V}(B(W))=W$. We take $x=e$, we have 
$$
n_{u}=2 \text{ for any }u\in B(2,x)
$$
and
$$
\mathcal{U}_v=B(2,x)\text{ for any }v\in B(1,x).
$$
The graph is triangle free so $t_v$ and $T_{v,v'}$ are zero. The resulting matrix is
\[
A(e)=
\begin{bmatrix}
\frac{8}{3}&-\frac{1}{3}&-\frac{1}{3}\\
-\frac{1}{3}&\frac{8}{3}&-\frac{1}{3}\\
-\frac{1}{3}&-\frac{1}{3}&\frac{8}{3}
\end{bmatrix}
\]
whose eigenvalues are $2$ and $\frac{10}{3}$ (the latter with multiplicity two). We obtain that $\Ric(G)=2$.
\end{example}

Theorem \ref{teomatrix} is a helpful tool to compute the Ricci curvature of a finite graph. We now show how combining this result with some theorems on the eigenvalues of a matrix  we can obtain inequalities for the curvature of infinite families of graphs.

\begin{teo}\label{corostima}
Let $G$ be a triangle free graph then 
\[
\Ric(G)\geq 4-\max_{x,y\in \mathcal{E}(G)} \left( \frac{3d(x)+d(y)}{2} \right)
\]

\end{teo}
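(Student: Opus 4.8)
The plan is to combine Theorem \ref{teomatrix} with Gershgorin's Theorem (Theorem \ref{gersh}). Since $\Ric(G)=\inf_x\Ric(G)_x=\inf_x\lambda_{\min}(A(x))$, it suffices to produce, for each fixed vertex $x$, a lower bound on the smallest eigenvalue of $A(x)$ depending only on the degrees occurring along edges. Gershgorin gives $\lambda_{\min}(A(x))\geq\min_i(A_{ii}(x)-R_i)$ with $R_i=\sum_{j\neq i}|A_{ij}(x)|$, so the whole argument reduces to estimating $A_{ii}(x)-R_i$ from below for every row $i$.

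First I would specialize $A(x)$ to the triangle-free case. Since $G$ has no triangles, no two neighbours of $x$ are adjacent, hence $t_{v_i}=0$ and $T(v_i,v_j)=0$, and the defining formula collapses to $A_{ii}(x)=\sum_{u\in\mathcal{U}_{v_i}}\frac{2(n_u-1)}{n_u}+1+\frac{4-d(x)-d(v_i)}{2}$ and, for $i\neq j$, $A_{ij}(x)=1-\sum_{u\in\mathcal{U}_{v_i}\cap\mathcal{U}_{v_j}}\frac{2}{n_u}$. The combinatorial heart of the estimate is the identity $\sum_{j\neq i}\sum_{u\in\mathcal{U}_{v_i}\cap\mathcal{U}_{v_j}}\frac{2}{n_u}=\sum_{u\in\mathcal{U}_{v_i}}\frac{2(n_u-1)}{n_u}$, proved by exchanging the order of summation: a fixed $u\in\mathcal{U}_{v_i}$ is joined to exactly $n_u$ neighbours of $x$, one being $v_i$, hence to $n_u-1$ of the $v_j$ with $j\neq i$. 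Crucially, the right-hand side is precisely the first summand of $A_{ii}(x)$.

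The main obstacle is that Gershgorin's radius uses $|A_{ij}|$, not $A_{ij}$, and the off-diagonal entries need not be nonnegative, so the identity above cannot be applied to $R_i$ directly. I would overcome this with the elementary inequality $|1-a|\leq 1+a$ valid for $a\geq 0$: applying it with $a=\sum_{u\in\mathcal{U}_{v_i}\cap\mathcal{U}_{v_j}}\frac{2}{n_u}$ bounds each $|A_{ij}|$ by $1+\sum_{u}\frac{2}{n_u}$, whence $R_i\leq(d(x)-1)+\sum_{u\in\mathcal{U}_{v_i}}\frac{2(n_u-1)}{n_u}$ by the identity. Subtracting, the awkward $\sum_{u}\frac{2(n_u-1)}{n_u}$ terms cancel exactly and leave $A_{ii}(x)-R_i\geq 4-\frac{3d(x)+d(v_i)}{2}$. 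Since $\{x,v_i\}$ is an edge, this is at least $4-\max_{\{x,y\}\in\mathcal{E}(G)}\frac{3d(x)+d(y)}{2}$; taking the minimum over rows and then the infimum over $x$ yields the claim.
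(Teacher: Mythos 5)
Your proposal is correct and follows essentially the same route as the paper's own proof: Theorem \ref{teomatrix} plus Gershgorin applied to the triangle-free specialization of $A(x)$, using exactly the double-counting identity $\sum_{j\neq i}\sum_{u\in\mathcal{U}_{v_i}\cap\mathcal{U}_{v_j}}\frac{2}{n_u}=\sum_{u\in\mathcal{U}_{v_i}}\frac{2(n_u-1)}{n_u}$ and the bound $|1-a|\leq 1+a$ on the off-diagonal entries so that the awkward sums cancel against the diagonal. If anything, your write-up is slightly cleaner than the paper's, since you state the exchange-of-summation identity explicitly and take the minimum of $A_{ii}(x)-R_i$ over all rows, whereas the paper carries out the computation only for the row of the maximum-degree neighbour (which implicitly amounts to the same thing).
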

\begin{proof}
Let $x\in \mathcal{V}(G)$ and $d=d(x)$, we write $B(1,x)=\{y_1,\ldots,y_{d(x)}\}$.
Let $y_i=y$ be the element in $B(1,x)$ with maximum degree. We want to compute the Gershgorin circle (see Theorem \ref{gersh}) associated to the i-th column of $A(x)$. Its center is
\[
A_{i,i}(x)=\sum_{u\in B(2,x)\cap B(1,y)}\frac{2(n_u-1)}{n_u}+1+\frac{4-d(x)-d(y)}{2},
\]
and its radius is:
\[
\sum_{j=1,j\neq i}^{d(x)}|A_{i,j}(x)|=\sum_{j=1,j\neq i}^{d(x)}|-\sum_{u\in B(1,y)\cap B(1,y_j)\cap B(2,x)}\frac{2}{n_u}+1|.
\]
The minimal real value contained in such a circle is
\begin{align*}
A_{1,1}-\sum_{i=2}^{d(x)}|A_{i,1}|\geq &A_{1,1}-\sum_{i=2}^{d(x)}|-(\sum_{u\in B(1,y)\cap B(1,y_i)\cap B(2,x)}\frac{2}{n_u}|+1)=\\
=& \sum_{{u\in B(2,x)\cap B(1,y)}}\frac{2(n_u-1)}{n_u}+1+\frac{4-d(x)-d(y)}{2}-\\&-\sum_{{u\in B(2,x)\cap B(1,y)}}\frac{2(n_u-1)}{n_u} -d(x)+1=\\
=& \frac{4-d(x)-d(y)+2-2d(x)+2}{2}\\
=&4-\frac{3d(x)-d(y)}{2}.
\end{align*}
We conclude that the minimal eigenvalue of $A(x)$ is greater than $4-\frac{3d(x)+d(y)}{2}$. From Theorem \ref{teomatrix} we know that 
\[
\Ric(G)=\text{min}\{\lambda | \lambda \text{ is eigenvalue of some }A(x)\}\geq \min_{x\in \mathcal{V}(G),y\in B(1,x)}\{4-\frac{3d(x)+d(y)}{2}\}.
\]
The claim follows.
\end{proof}

We go on by considering a corollary of Theorem \ref{corostima} that holds for graphs with no triangles and no quadrilaterals. These are graphs such that given a path of length 
$4$ $v_1-v_2-v_3-v_4-v_5$ the five vertices are distinct. With this condition any length-2 path subgraph has no cycles.

\begin{corollary}\label{coro1}
Let $G$ be a graph with no triangles nor quadrilaterals. The following inequality then holds for $\ric(G)$:
\[
4-max_{\{x,v\}\in \mathcal{E}(G)}\left(\frac{3d(x)+d(v)}{2}\right)\leq \ric(G)\leq min(2,\frac{2+d(x')-d(v')}{2})
\]
where $x'$, $v'$ are adjacent vertices that minimizes $d(x')-d(v')$.
\end{corollary}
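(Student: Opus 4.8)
The plan is to prove the two inequalities separately, since the lower bound is immediate from the preceding theorem while the upper bound requires a fresh construction of a test function.

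For the lower bound, I would simply invoke Theorem~\ref{corostima} directly. Under the stated hypotheses $G$ is in particular triangle-free, so the bound $\Ric(G)\geq 4-\max_{\{x,y\}\in\mathcal{E}(G)}\left(\frac{3d(x)+d(y)}{2}\right)$ applies verbatim, and this is exactly the left-hand side of the claimed chain (with $v$ in place of $y$). So the left inequality costs nothing beyond citing the previous result.

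For the upper bound, the strategy is to exhibit explicit test functions $f$ that force $\Gamma_2(f)(x')/\Gamma(f)(x')$ to be small at a well-chosen vertex $x'$, then appeal to Lemma~\ref{proof3}, which lets me compute $\Ric(G)$ as an infimum of such Rayleigh-type quotients. The bound is a minimum of two quantities, so I would establish each separately. The inequality $\Ric(G)\leq 2$ is the content of Corollary~\ref{coro}, which holds for any triangle-free graph, so that half is again just a citation. For the remaining bound $\Ric(G)\leq \frac{2+d(x')-d(v')}{2}$, I would use the triangle-and-quadrilateral-free structure: here the length-2 path subgraph at any vertex is a tree (as the excerpt notes, it has no cycles), so $n_u=1$ for every $u\in B(2,x')$ and the triangle terms $t_v,T(v,v')$ all vanish. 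The clean idea is to pick $x'$ and an adjacent $v'$ minimizing $d(x')-d(v')$ and test the simplified $\Gamma_2$ formula from Remark~\ref{triang} with a function supported essentially on the edge $\{x',v'\}$ and the neighbors of $v'$. Concretely, setting $f(v')=1$, $f=0$ on the other neighbors of $x'$, and choosing the values $f(u)$ on $B(2,x')\cap B(1,v')$ to annihilate the sum $\sum_{v\in B(1,u)\cap B(1,x')}(f(u)-2f(v))^2$ (possible since each such $u$ has a unique path to $x'$, forcing $f(u)=2f(v')=2$), I expect the first and cross terms to drop out and leave $2\Gamma_2(f)(x') = \left(\sum_v f(v)\right)^2 + \sum_v (4-d(x')-d(v))f(v)^2 = 1 + (4-d(x')-d(v'))$, while $2\Gamma(f)(x')=\sum_v f(v)^2 = 1$. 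Taking the ratio yields $\Ric(G)_{x'}\leq \frac{5-d(x')-d(v')}{2}$, and the quotient of the specified form should emerge after the correct bookkeeping.

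\textbf{The main obstacle} I anticipate is getting the constant in the upper bound to match exactly. My naive computation above gives an exponent involving $d(x')+d(v')$, whereas the claimed bound involves $d(x')-d(v')$; the discrepancy tells me the test function cannot be concentrated on a single neighbor but must be spread so that the degree-$v'$ contribution enters with the opposite sign. The genuine work is therefore in choosing the support and the weights of $f$ on $B(1,x')$ and $B(2,x')$ so that the cross terms and the $\left(\sum_v f(v)\right)^2$ term combine to convert the $+d(v')$ into $-d(v')$; this is exactly the kind of delicate optimization of the quadratic form $y^t A(x')y$ that Theorem~\ref{teomatrix} reduces everything to. I would verify the final constant by directly evaluating $A(x')$ on the candidate vector and confirming its Rayleigh quotient equals $\frac{2+d(x')-d(v')}{2}$, which then certifies the upper bound via the eigenvalue characterization rather than the raw $\Gamma_2$ formula.
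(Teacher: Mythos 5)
Your handling of the lower bound and of $\Ric(G)\leq 2$ is correct and is in substance what the paper does: the lower bound is exactly Theorem \ref{corostima} (the paper re-derives it by applying Gershgorin's theorem to the simplified matrix $A(x)$, which under the no-triangle, no-quadrilateral hypothesis has diagonal entries $\frac{4-d(x)-d(v_i)}{2}+1$ and all off-diagonal entries equal to $1$, but that is the same computation), and $\Ric(G)\leq 2$ is Corollary \ref{coro}.

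The genuine gap is the inequality $\Ric(G)\leq\frac{2+d(x')-d(v')}{2}$, which your proposal never establishes: you compute a Rayleigh quotient, observe it has the wrong form, and defer the repair to an unspecified ``delicate optimization''. First, a small slip: the last term in your test-function computation contributes $\frac{4-d(x')-d(v')}{2}$, not $4-d(x')-d(v')$, so what your $f$ actually certifies is $\Ric(G)_{x'}\leq A(x')_{11}=3-\frac{d(x')+d(v')}{2}$, i.e.\ the diagonal Rayleigh bound. Second, and decisively, no optimization can close the gap, because the printed bound is false. Take $G=P_2$, a single edge $\{a,b\}$: it has no triangles or quadrilaterals, and $A(a)=A(b)=[2]$ (equivalently, $\Gamma_2(f)(a)/\Gamma(f)(a)\equiv 2$ by direct computation), so $\Ric(G)=2$; this also follows from the corollary's own lower bound, $4-\frac{3\cdot 1+1}{2}=2$. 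Yet every adjacent pair has $d(x')-d(v')=0$, so the claimed upper bound is $\min(2,1)=1$, and the statement contradicts itself on this graph. Since $A(x')$ is symmetric, every Rayleigh quotient is an upper bound for its least eigenvalue, so if some test vector certified the printed constant, that constant would be true---it is not. The root cause is an off-by-one error in the paper's own proof: the rightmost point of the relevant Gershgorin disc is its center $\frac{4-d(x')-d(v')}{2}+1$ plus its radius $d(x')-1$, namely $\frac{4+d(x')-d(v')}{2}$, whereas the paper writes $\frac{4-d(x')-d(v')}{2}+d(x')-1=\frac{2+d(x')-d(v')}{2}$. What your corrected computation does prove is $\Ric(G)\leq 3-\frac{d(x')+d(v')}{2}\leq 2+\frac{d(x')-d(v')}{2}$ (the second inequality uses $d(x')\geq 1$), i.e.\ the corollary with $\frac{4+d(x')-d(v')}{2}$ in place of $\frac{2+d(x')-d(v')}{2}$; that weakened statement is the most that your strategy, or the paper's, can deliver.
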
  
\begin{proof}
We fix a vertex $x$ and $u\in B(2,x)$,
 we notice that $n_u=1$. For the same reason $\mathcal{U}_{v}\cap \mathcal{U}_{v'}=\emptyset$ for every $(v,v')$ pair of distinct elements in $B(1,x)$. 
Suppose that $|B(1,x)|=n$, we assume that these $n$ elements are ordered $B(1,x)=\{v_i\}_{i=1,\ldots,n}$.
Therefore the matrix $A(x)$ described in Theorem \ref{teomatrix} is:
\[
A(x)=
\begin{bmatrix}
d_1+1      & 1      &  \ldots & 1\\
1      & \ddots &\ddots   & \vdots \\
\vdots & \ddots & \ddots  & 1\\
1      &\ldots  & 1       & d_n+1
\end{bmatrix},
\]
where  $d_i=\frac{4-d(x)-d(v_i)}{2}$. We know that
$\ric(G)_x$ is the minimum eigenvalue of $A(x)$. We apply Gershgorin's Theorem  to this matrix to bound this value. First we consider the Gershgorin circles, these are the sets
\[
K_i=\{z\in\mathbb{C}: |z-d_i-1|\leq d-1\}.
\]
We deduce that $\ric(G)_x\geq \frac{4-d(x)-d(v)}{2}+1-d+1=\frac{8-3d(x)-d(v)}{2}$ where $v$ is the element in $B(1,x)$ with maximum degree. Also, we obtain that $\ric(G)_x\leq \frac{4-d(x)-d(v)}{2}+d(x)-1=\frac{2+d(x)-d(v)}{2}$ where $v$ is the element in $B(1,x)$ with minimum degree. Notice that because $G$ is triangle free we know from \ref{triang} that $\ric(G)\leq 2$. We obtain the following bounding for $\ric(G)$:
\[
4+\frac{-3d(x)-d(v)}{2} \leq \ric(G)\leq \min(2,\frac{2+d(x')-d(v')}{2})
\]
where $(x,v)$ is a pair of adjacent vertices that maximizes $3d(x)-d(v)$ and $x'$, $v'$ are adjacent vertices that minimizes $d(x')-d(v')$.
\end{proof}

This result can be used to bound the Ricci curvature of any tree . A new corollary follows, this one gives the exact Ricci curvature of a subfamily of graphs with no triangles nor quadrilaterals.

\begin{corollary}\label{coro2}
Let $G$ be a graph with no triangles and quadrilaterals and with the property that all the length-2 path subgraphs are isomorphic. Then
\[
\ric(G)={2-d}
\]
where $d$ is the degree of any vertex in $G$.
\end{corollary}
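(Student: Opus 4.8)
The plan is to reduce everything to a single eigenvalue computation via Theorem \ref{teomatrix}. First I would observe that the hypothesis forces all vertices to share a common degree: since the degree of a vertex can be read off from its length-2 path subgraph, and all such subgraphs are isomorphic, every vertex has the same degree $d$. Moreover, Remark \ref{obs2} guarantees that vertices with isomorphic length-2 path subgraphs have equal local Ricci curvature, so $\ric(G)_x$ is the same for every $x$, and hence $\ric(G)=\ric(G)_x$ for an arbitrary fixed vertex $x$. By Theorem \ref{teomatrix} this common value is the minimum eigenvalue of the matrix $A(x)$.

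Next I would specialize the matrix $A(x)$ to the present situation. Because $G$ has no triangles and no quadrilaterals, the proof of Corollary \ref{coro1} already exhibits $A(x)$ as the $d\times d$ matrix with diagonal entries $d_i+1$ and all off-diagonal entries equal to $1$, where $d_i=\frac{4-d(x)-d(v_i)}{2}$. Plugging in $d(x)=d(v_i)=d$ gives $d_i=2-d$ for every $i$, so every diagonal entry equals $3-d$. Thus
\[
A(x)=(2-d)\,I_d+J_d,
\]
where $I_d$ is the identity and $J_d$ is the all-ones $d\times d$ matrix.

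Finally I would read off the spectrum. The matrix $J_d$ has eigenvalue $d$ with multiplicity one (eigenvector the all-ones vector) and eigenvalue $0$ with multiplicity $d-1$, so $A(x)$ has eigenvalue $(2-d)+d=2$ with multiplicity one and eigenvalue $2-d$ with multiplicity $d-1$. For $d\geq 2$ the eigenvalue $2-d$ genuinely occurs and satisfies $2-d<2$, so the minimum eigenvalue of $A(x)$ is $2-d$; combined with the first paragraph this yields $\ric(G)=2-d$.

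I do not expect a serious obstacle here: once the matrix is identified as a rank-one perturbation of a scalar multiple of the identity, the eigenvalues are immediate. The only point demanding care is the degenerate case $d=1$ (for instance a single edge), where the eigenvalue $2-d$ has multiplicity $d-1=0$ and so does not appear, leaving $2$ as the minimum eigenvalue; the stated formula should therefore be read for $d\geq 2$.
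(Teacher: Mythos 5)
Your proposal is correct, and its overall structure is the same as the paper's: reduce to a single vertex via Remark \ref{obs2}, invoke Theorem \ref{teomatrix}, and identify $A(x)$ as the matrix with diagonal entries $3-d$ and off-diagonal entries $1$ exactly as in the proof of Corollary \ref{coro1}. The one genuine difference is the final spectral computation. The paper observes that $A(x)$ is a circulant Toeplitz matrix and applies Theorem \ref{circ}, obtaining $\lambda_0=3-d+(d-1)=2$ and $\lambda_k=3-d+\sum_{i=1}^{d-1}\zeta^{ki}=2-d$ for $k=1,\ldots,d-1$; you instead write $A(x)=(2-d)I_d+J_d$ and read off the spectrum of the all-ones matrix $J_d$, which is more elementary and avoids the circulant machinery entirely (the paper's choice is natural only because Theorem \ref{circ} is already set up in its preliminaries and reused later for $\tilde{A}_n$). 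Both computations yield eigenvalues $2$ with multiplicity one and $2-d$ with multiplicity $d-1$. Your closing caveat is also a real catch: when $d=1$ the eigenvalue $2-d$ has multiplicity zero, the minimum eigenvalue is $2$, and the stated formula $\ric(G)=2-d$ fails; the paper's proof silently overlooks this because its index range $k=1,\ldots,d-1$ is empty in that case, so the statement should indeed be read with $d\geq 2$.
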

\begin{proof}
We fix any vertex $x$ in $G$, from the hypothesis on the length-2 path subgraphs we can say that $\ric(G)_x=\ric(G)$. The matrix $A(x)$ associated to this vertex is:
\[
A(x)=
\begin{bmatrix}
3-d      & 1      &  \ldots & 1\\
1      & \ddots &\ddots   & \vdots \\
\vdots & \ddots & \ddots  & 1\\
1      &\ldots  & 1       & 3-d
\end{bmatrix},
\]

We conclude that $A(x)$ is a circulant matrix and $\ric(G)$ is its smallest eigenvalue. By Theorem \ref{circ} the eigenvalues of $A(x)$ are:
\[
\lambda_k=3-d+\sum_{i=1}^{d-1}{\zeta}^ki=2-d\quad \zeta\text{ any primitive d-th root of unity for }k=1,\ldots,d-1
\]
\[
\lambda_0=3-d+d-1=2.
\]
By Theorem \ref{circ} we conclude that $\ric(G)=2-d$.
\end{proof}

This Corollary implies the following:

\begin{corollary}
Any integer smaller than $2$ is the discrete Ricci curvature of a graph.
\end{corollary}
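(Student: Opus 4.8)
The plan is to exhibit, for each integer $k<2$, an explicit graph whose discrete Ricci curvature equals $k$, and the natural parametrization is to write $k=2-d$. The values $k\le 0$ correspond to $d\ge 2$, which is exactly the regime where Corollary~\ref{coro2} is directly available. I would take $G$ to be the infinite $d$-regular tree: it is triangle-free and quadrilateral-free (being a tree) and vertex-transitive, so all of its length-$2$ path subgraphs are isomorphic, and Corollary~\ref{coro2} applies verbatim to give $\ric(G)=2-d=k$. Letting $d$ range over $\{2,3,4,\dots\}$ then produces every integer in $\{0,-1,-2,\dots\}$.

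The one integer not reached this way is $k=1$, which would formally require $d=1$, and here I expect the main obstacle. The only $1$-regular graph is a disjoint union of copies of $K_2$, and a direct computation (or Remark~\ref{triang}) gives $\ric(K_2)=2$, not $2-1=1$. The discrepancy is precisely that in the proof of Corollary~\ref{coro2} the eigenvalue $2-d$ occurs with multiplicity $d-1$, which vanishes when $d=1$; so the circulant argument simply does not deliver the value $1$, and this case must be handled by a genuinely different graph.

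To realize $k=1$ I would instead use a triangle-rich vertex-transitive graph and appeal to Theorem~\ref{teomatrix} directly, the clean choice being the complete graph $K_6$. At any vertex $x$ one has $d(x)=5$ and $B(2,x)=\emptyset$; every edge $\{x,v_i\}$ lies in $t_{v_i}=4$ triangles, and the five neighbours are pairwise adjacent, so $T(v_i,v_j)=1$ for all $i\neq j$. Substituting into the definition of $A(x)$ yields a $5\times 5$ matrix with every diagonal entry equal to $4$ and every off-diagonal entry equal to $3$, that is $A(x)=3J+I$ with $J$ the all-ones matrix. Its spectrum is $\{16,1,1,1,1\}$, so by Theorem~\ref{teomatrix} the minimal eigenvalue, and hence $\ric(K_6)$, equals $1$.

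Combining the two constructions covers $1,0,-1,-2,\dots$, i.e.\ every integer $k<2$, which is the assertion. The only delicate point is the one flagged above: recognising that the tree construction silently fails at $k=1$ and supplying the separate example $K_6$ (consistent with the general formula $\ric(K_n)=(n-4)/2$, verifiable by the same substitution). The remaining work is routine evaluation of the entries of $A(x)$ and the standard eigenvalue computation for matrices of the form $3J+cI$.
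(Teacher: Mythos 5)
Your argument for $k\le 0$ is correct and is essentially the paper's own: the paper realizes $z<2$ as the curvature of the Cayley graph of a ``free group on $2-z$ generators'', i.e.\ a regular tree, via Corollary \ref{coro2} (strictly speaking the Cayley graph of the free group $F_{2-z}$ is the $2(2-z)$-regular tree, so the group one really wants is the free product of $2-z$ copies of $\mathbb{Z}/2\mathbb{Z}$; your formulation with $d$-regular trees avoids this ambiguity). Your second observation is also genuine and sharp: the tree construction silently fails at $k=1$, since $1$-regular graphs are disjoint unions of $K_2$'s with curvature $2$, and for $z=1$ the paper's recipe yields either $K_2$ (curvature $2$) or the $2$-regular path (curvature $0$). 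So the paper's one-line proof is incomplete exactly where you say it is.

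The gap is in your repair. The claim $\Ric(K_6)=1$ is false, and the fault lies not in your linear algebra but in the formula you substituted into: the triangle terms in the paper's printed definition of $A(x)$ are inconsistent with Proposition \ref{proof2}, from which Theorem \ref{teomatrix} is derived. Reading off the quadratic form in Proposition \ref{proof2}, an edge $\{v_i,v_j\}$ inside $B(1,x)$ contributes $2(f(v_i)-f(v_j))^2+\frac{1}{2}\left(f(v_i)^2+f(v_j)^2\right)$ to $2\Gamma_2(f)(x)$, hence $+\frac{5}{2}$ (not $+\frac{3}{2}$) to each of the two relevant diagonal entries and $-2$ (not $+2$) to the off-diagonal entry $A_{ij}$. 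Already $K_3$ exposes this: a direct computation from the definitions gives $\Gamma_2(f)(x)=\frac{1}{2}(f(v_1)-f(v_2))^2+\frac{5}{4}\left(f(v_1)^2+f(v_2)^2\right)$, so $\Ric(K_3)=\frac{5}{2}$, whereas the printed matrix $\left(\begin{smallmatrix}5/2&3\\ 3&5/2\end{smallmatrix}\right)$ has minimal eigenvalue $-\frac{1}{2}$. With the corrected coefficients the matrix for $K_n$ is $\frac{3n}{2}I-J$, whence $\Ric(K_n)=\frac{n+2}{2}$ --- precisely the equality case of the bound $2+\frac{T}{2}$ of Theorem \ref{teotriang} with $T=n-2$. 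In particular $\Ric(K_6)=4$, your general formula $(n-4)/2$ is incorrect, and no complete graph has curvature $1$; the case $k=1$ therefore remains unproved in your proposal.

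The statement does survive at $k=1$, but one needs a different witness. For instance $K_{2,4}$ is triangle-free, and minimizing over the values of $f$ on $B(2,x)$ as in the proof of Theorem \ref{teomatrix} one gets from Proposition \ref{proof2}, at a degree-$4$ vertex, $2\Gamma_2\ge \sum_i a_i^2+\frac{1}{2}\bigl(\sum_i a_i\bigr)^2$ and, at a degree-$2$ vertex, $2\Gamma_2\ge 3(b_1^2+b_2^2)-4b_1b_2$; both ratios against $2\Gamma$ have infimum $1$, so $\Ric(K_{2,4})=1$. As it stands, your proposal establishes the corollary only for integers $k\le 0$.
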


 In particular given $z\in \mathbb{Z}$, $z<2$ it is the curvature of the Cayley graph of the free group generated by $2-z$ elements. 

\section{Ricci curvature of Weak orders}\label{secweak}
In this section we compute the Ricci curvature of the Hasse graphs of the weak Bruhat order associated to Coxeter groups, according to our notation we write $V(W)$ referring to the weak order graph of $W$. 
The study of the curvature of these weak order graphs, together with Theorem \ref{sgiso} allows us to obtain an isoperimetric inequality.
Furthermore the weak order graph associated to affine Weyl groups is locally finite as well, this leads to two theorems: one for finite Coxeter groups (\ref{V(W)1}) and one for affine Weyl groups (\ref{V(W)2}). The main idea behind the proof of the result in this Section is similar to the one used to prove Theorem \ref{teomatrix}: we find the minimum of a rational function studying the minimal eigenvalue of a matrix.

We are ready for the first theorem of this section, that only deals with finite irreducible Coxeter groups. A way to study the Ricci curvature of $V(W)$ if $W$ is a finite non-irreducible Coxeter group is presented in 
Theorem \ref{V(W)red}.
\begin{teo}
\label{V(W)1}
Given $(W,S)$ a finite irreducible Coxeter system and $V(W)$ the weak order graph associated to it, the following holds:
\begin{itemize}
\item $\Ric(V(W))=-2\cos(\frac{\pi}{|S|})$ if $W$ is a strictly linear Coxeter group; 
\item $-4 \leq \Ric(V(W))\leq -2$ if $W=D_n$ with $n\geq 4$ and $\Ric(V(D_3))\simeq -1.7$;
\item $\Ric(V(E_6))\simeq -2.30$, $\Ric(V(E_7))\simeq -2.33$  and $\Ric(V(E_8))\simeq -2.34$.
\end{itemize}
\end{teo}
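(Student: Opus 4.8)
The plan is to exploit the homogeneity of the weak order graph and reduce everything, group by group, to a single explicit matrix. First I would observe that $V(W)$ is the Cayley graph of $W$ with respect to $S$: every edge is a covering relation $\{w,ws_i\}$, and since each $s_i$ is an involution, $V(W)$ is $|S|$-regular. Left multiplication by any $g\in W$ sends $\{w,ws_i\}$ to $\{gw,gws_i\}$ and is therefore a graph automorphism, so by Lemma \ref{stesso} all vertices have the same local curvature and $\Ric(V(W))=\Ric(V(W))_e=\lambda_{\min}(A(e))$ by Theorem \ref{teomatrix}. I would also record that $V(W)$ is triangle-free, since a triangle would force a relation $s_is_js_k=e$ among three generators, impossible for distinct involutions.

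The heart of the argument is to compute $A(e)$ explicitly. Writing $B(1,e)=\{s_1,\dots,s_n\}$ with $n=|S|$, the distance-$2$ vertices are the length-$2$ elements $s_is_j$ with $i\neq j$. The key Coxeter-theoretic fact is that such an element has a unique reduced word unless the two generators commute, in which case $s_is_j=s_js_i$ has two. Hence $n_u=2$ exactly for the $u$ arising from a commuting pair (these produce the quadrilaterals of $V(W)$) and $n_u=1$ otherwise; likewise $s_i$ and $s_j$ share a common distance-$2$ neighbour iff $m_{i,j}=2$. Substituting this into the definition of $A(e)$, and using $d(x)=d(v_i)=n$ together with $t_{v_i}=T(v_i,v_j)=0$ from triangle-freeness, the diagonal entry reduces to $2-\deg_C(s_i)$ and the off-diagonal entry to $1$ when $m_{i,j}\geq 3$ and $0$ when $m_{i,j}=2$, where $\deg_C$ is the degree in the Coxeter graph $C$. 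In other words $A(e)=2I-L_C$, with $L_C$ the combinatorial Laplacian of $C$, so that $\Ric(V(W))=2-\mu_{\max}(L_C)$, where $\mu_{\max}$ is the largest Laplacian eigenvalue of the (unlabelled) Coxeter graph.

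With this identity the three bullets are specializations. For a strictly linear group $C$ is a path on $n=|S|$ vertices, so $A(e)$ is tridiagonal with diagonal $(1,0,\dots,0,1)$ and off-diagonals $1$; its spectrum is the classical reflected path-Laplacian spectrum $\{2\cos(k\pi/n):k=0,\dots,n-1\}$, obtainable from the tridiagonal eigenvalue formula of Theorem \ref{teowen}, whose minimum is $2\cos\bigl((n-1)\pi/n\bigr)=-2\cos(\pi/n)$. For $D_n$ with $n\geq 4$ the Coxeter graph is a tree of maximum degree $3$; applying Gershgorin (Theorem \ref{gersh}) to $A(e)=2I-L_C$, each disc is centred at $2-\deg_C(s_i)$ with radius $\deg_C(s_i)$, so its leftmost point is $\geq 2-2\cdot 3=-4$, giving $\Ric\geq -4$, while the presence of a degree-$3$ vertex forces $\mu_{\max}(L_C)\geq\Delta_C+1=4$ (exhibited by a test vector concentrated on that vertex and its three neighbours), giving $\Ric\leq -2$. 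The remaining finite cases $D_3$ and $E_6,E_7,E_8$ have Coxeter graphs on $3,6,7,8$ vertices, so here I would simply compute $\mu_{\max}(L_C)$, equivalently $\lambda_{\min}(A(e))$, directly for the explicit small matrix to obtain the stated numerical values.

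The main obstacle is the second step: justifying $A(e)=2I-L_C$ rests on the combinatorial analysis of length-$2$ elements (uniqueness of the reduced word up to commutation), which is what pins down every $n_u$ and every common-neighbour set and thereby translates the local structure of $V(W)$ into the Coxeter graph. Once this structural identity is in place, the three cases become routine spectral graph theory. A secondary subtlety is the correct boundary behaviour of the tridiagonal matrix in the strictly linear case, where the corner entries $1$ (rather than $-1$) yield the \emph{Neumann}-type spectrum $2\cos(k\pi/n)$ indexed from $k=0$, and it is precisely the $k=n-1$ eigenvalue that realizes the curvature.
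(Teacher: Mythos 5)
Your proposal is correct and takes essentially the same route as the paper. The paper also reduces by vertex-transitivity (Lemma \ref{stesso}) to the local curvature at $e$, uses the same structural fact about length-two elements (two reduced expressions precisely when the two generators commute, proved there via $W_I\cap W_J=W_{I\cap J}$), and lands on $\Ric(V(W))=2-\lambda_{\max}(M_W)$, where the matrix $M_W$ of (\ref{matrixV(W)}) is exactly the Laplacian $L_C$ of the unlabelled Coxeter graph; that is your identity $A(e)=2I-L_C$ read on the other side of the shift (the paper re-runs the optimization underlying Theorem \ref{teomatrix} by hand instead of quoting it, which is the only presentational difference). The case analysis is likewise the same: a tridiagonal eigenvalue formula for paths, Gershgorin plus a degree-three vertex for $D_n$, explicit small computations for the exceptional types.

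Three remarks. First, Theorem \ref{teowen} does not apply verbatim to your $A(e)$: with $b=0$, $a=c=1$ its hypothesis $\alpha=\beta=\sqrt{ac}$ would force corner entries $-1$, whereas yours are $+1$; apply it instead to $L_C=2I-A(e)$ (as the paper does), or to $-A(e)$ and negate, after which your spectrum $\{2\cos(k\pi/n)\,:\,k=0,\dots,n-1\}$ and its minimum $-2\cos(\pi/n)$ are correct. Second, your upper bound for $D_n$ via $\mu_{\max}(L_C)\geq 4$ (test vector supported on the degree-three star) is sound, and in fact cleaner than the paper's: the paper's test vector $e_3+e_n$ has squared norm $2$, so without normalization its computation only certifies $\lambda_{\max}\geq 2$, i.e.\ $\Ric\leq 0$. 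Third, and most importantly, a faithful execution of your plan for $D_3$ will \emph{not} return the stated value $-1.7$: the Coxeter graph of $D_3\cong A_3$ is a path on three vertices, so $\mu_{\max}(L_C)=3$ and $\Ric(V(D_3))=-1=-2\cos(\pi/3)$, exactly as the first bullet of the statement itself forces. The paper's figure $-1.7$ comes from using the generic $D_n$ matrix shape, with diagonal entry $3$ at the branch vertex, at $n=3$, where that vertex actually has degree $2$; this is an inconsistency in the statement and the paper's proof, not a flaw in your argument.
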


\begin{proof}

Given an element $g\in W$ we can consider the automorphism induced on $V(W)$ by the multiplication by $g$. By Lemma \ref{stesso}
 the local Ricci curvature at a given vertex (say $e$) coincides with the global Ricci curvature of $V(W)$. We compute the former, therefore we consider the length-2 path subgraph with center $e$. First we want to study how many paths connect a given element in $u\in B(2,e)$ to $e$. Because every edge in $V(W)$ is labeled with a simple rlefection, the study of the paths correspond to the study of the equalities $ss'=zz'$ with $s,s',z,z'\in S$. 
Note that if $s_1s_2=s_3s_4\neq e$ in $W$ with $s_1,s_2,s_3,s_4\in S$ and $(s_1,s_2)\neq(s_3,s_4)$ then
$s_1=s_4$, $s_2=s_3$ and $m_{1,2}=2$.
Let $W_I$ and $W_J$ the parabolic subgroups of $W$ generated by $I=\{s_1,s_2\}$ and $J=\{s_3,s_4\}$. We notice that $W_I\cap W_{J}\neq \{0\}$, from $W_I\cap W_J=W_{I\cap J}$ it follows from \cite[Proposition 2.4.1]{BB} that $I\cap J\neq \emptyset$, this implies that either $s_1=s_4$, either $s_2=s_3$. From this it follows that $s_1=s_4$ and $s_3=s_2$.

A vertex in $B(2,e)$ is therefore connected to $e$ through one or two paths. Here follows the picture of the length-2 path subgraphs of $e$ in the weak order associated to $S_4$.
\begin{center}\label{fig}
  \begin{tikzpicture}[scale=1]
     \draw (0,0) node[anchor=east]  {$e$};
     \draw (-1,0) node[anchor=east]  {$s_{12}s_{34}$};
     \draw (1,0) node[anchor=west]  {$s_{23}$};
     \draw (-0.5,0.87) node[anchor=west]  {$s_{12}$};
     \draw (-0.5,-0.87) node[anchor=west]  {$s_{34}$};
     \draw (1.5,0.87) node[anchor=west]  {$s_{23}s_{12}$};
     \draw (1.5,-0.87) node[anchor=west]  {$s_{23}s_{34}$};
     \draw (-0.5,-1.87) node[anchor=north]  {$s_{34}s_{23}$};    
     \draw (-0.5,1.87) node[anchor=south]  {$s_{12}s_{23}$};
\foreach \Point in {(0,0), (1,0),(-1,0),(-0.5,0.87),(1.5,0.87), (-0.5,-0.87),(1.5,-0.87),(-0.5,-1.87),(-0.5,1.87)}{
    \node at \Point {\textbullet};
}
    \draw[thick] (0,0) -- +(-0.5,0.87);
    \draw[thick] (0,0) -- +(-0.5,-0.87);
    \draw[thick] (0,0) -- +(1,0);
    \draw[thick] (-1,0) -- +(0.5,-0.87);
    \draw[thick] (-1,0) -- +(0.5,0.87);
    \draw[thick] (1,0) -- +(0.5,0.87);
    \draw[thick] (1,0) -- +(0.5,-0.87);
    \draw[thick] (-0.5,-1.87) -- +(0,1);
    \draw[thick] (-0.5,1.87) -- +(0,-1);        
  \end{tikzpicture}
\end{center}

 If there is one path from $u$ to $e$, say  $u-v-e$, then $u=vv'$ and $m_{v,v'}\geq 3$; if there are two, say $u-v-e$ and $u-v'-e$, then $u=vv'$ and $m_{v,v'}=2$.

Let $u\in B(2,e)$, consider a pair of reflections in $S$, say $s$ and $s'$, we then have the following inequalities:

\begin{align*}
(f(u)-2f(s))^2+(f(u)-2f(s'))^2=& 2f(u)^2
+4f(s)^2+4f(s')^2-4f(u)f(s)-\\&4f(u)f(s')+4f(s)f(s')-4f(s)f(s')\\
=&2(f(u)-f(s)-f(s'))^2+2(f(s)-f(s'))\\
\geq &2(f(s)-f(s'))^2 
\end{align*}
If $u=ss'=s's$  and
\[
(f(u)-2f(s))^2
\geq 0 \quad \text{ if }u=ss', \text{ with }ss'\neq s's.
\]
Therefore the minimal values for $\Gamma_2(f)(e)$ are reached for $f(u)=f(s)+f(s')$ if $u=ss'=s's$ and $u=2f(s)$ otherwise. We substitute these minimal values in the formula presented in Proposition \ref{proof2} for the discrete curvature obtaining:
\begin{align}
\Ric(V(W)):=&\inf_{f} \frac{\frac{1}{2}\sum_{s\in S, u\in B(1,s)\cap B(2,e)}(f(u)-2f(s))^2+(\sum_{s\in S}f(s))^2+(2-d(e))\sum_{s\in S}f(s)^2}{\sum_{s}f(s)^2} \nonumber\\
=& \inf_{f} \frac {\sum_{\{s,s'\}\in S}(f(s)-f(s'))^2+(\sum_{s\in S}f(s))^2+(2-d(e))\sum_{s\in S}f(s)^2}{\sum_{s\in S}f(s)^2} \nonumber\\
& -\frac{\sum_{\{s,s'\}\subset S,m_{s,s'}\geq 3}(f(s)-f(s'))^2}{\sum_{s\in S}f(s)^2} \nonumber\\
=& 2- \sup_{f} \frac {\sum_{\{s,s'\}\subset S,m_{s,s'}\geq 3}(f(s)-f(s'))^2}{\sum_{s\in S}f(s)^2}. \label{fracteq}
\end{align}

We focus now on the rational function that appears in equation (\ref{fracteq}). Due to the condition $\sum_{s\in S}f(s)^2\neq 0$ we can simply study  
\[
   P_{W}(\{x_s\}_{s\in S}):=\sum_{\{s,s'\}\subset S,m_{s,s'}>2}(x_s-x_{s'})^2\quad\text{with }\sum_{s\in S}x_s^2=1.
\]

The polynomial $P_{W}(x)$ can be seen as the coordinate representation of the bilinear form induced by the matrix $M_{W}$ with
\begin{equation}\label{matrixV(W)}
[M_{W}]_{i,j}=
\begin{cases}
-1 & \text{ if }s_is_j\neq s_js_i \\
0&\text{ if }s_is_j=s_js_i\text{ and }i\neq j\\
\#\{k|s_is_k\neq s_ks_i\}& \text{ if }i=j.
\end{cases}
\end{equation}
The matrix $M_{W}$ is symmetric, therefore diagonalizable and so has a maximum eigenvalue $\lambda_{W}\in\mathbb{R}$. We notice that the structure of $M_W$ does not depend on the
order $m_{i,j}$ of  $s_is_j$ but rather on whether $m_{i,j}$ is different or not from $2$. This information is encoded by the Coxeter graph of $W$, ignoring the labels of the edges. In particular we notice that the Matrix $M_W$ has the same zero entries of the Cartan matrix associated to $W$ . We split the proof in three cases as follows:
\begin{itemize}
    \item[case 1] $W$ is a strictly linear finite Coxeter group;
    \item[case 2] $W$ is of type $D_n$, $n\geq 3$;
    \item[case 3] $W$ is of type $E_6, E_7$ or $E_8$.
\end{itemize}

\textbf{Case 1}. We can see $P$ as a coordinate representation of a quadratic form in $\mathbb{R}^{n}$ induced by the following matrix:
\[
M_{V(W)}=
\begin{bmatrix}
    1     &-1     &0      &\ldots & 0\\
    -1    &2      &\ddots &\ddots &\vdots\\
    0     &\ddots &\ddots &\ddots &0\\
    \vdots&\ddots &\ddots &2      &-1\\
     0    &\ldots &0      &-1     &1     
\end{bmatrix}
\]
By Theorem \ref{teowen} applied with
$$\alpha=\beta=1,\quad b=2, \quad a=c=-1 $$  
we conclude that the eigenvalues of $M_{V(W)}$ are
\[
 \lambda_i=2(1+\cos(\frac{i\pi}{n})) \text{ for } i=1he\ldots,n.
\] 
The maximum eigenvalue is therefore $2(1+\cos(\frac{\pi}{n}))$. So
\[
\text{Ric}(V(W))=2-2(1+\cos(\frac{\pi}{n}))=-2\cos(\frac{\pi}{n}).
\]
where $n=|S|$.

\textbf{Case 2}.  With the same procedure we find the matrix associated to the groups of type $D_n$, this is:\\
\begin{tabular}{ll}
\begin{minipage}{1\textwidth}
\[
M_{D_n}= 
\begin{bmatrix}
1&0&-1&&&&&\\
0&1&-1&&&&&\\
-1&-1&3&-1&&&&\\
\hline
&&-1&2&-1&&&\\
&&&\ddots&\ddots&\ddots&\\
&&&&-1&2&-1\\
\hline
&&&&&-1&1
\end{bmatrix}
\]
\end{minipage}
\end{tabular}
No formulas seem to be known for the eigenvalues of these matrices, but we can bound the greatest one with the help of Gershgorin's Theorem ( Theorem \ref{gersh}). The Gershgorin circles are:
\[
K_i = \begin{cases} \{z\in\mathbb{C}: |z-1|\leq 1\} &\mbox{for } i=1,2,n; \\ 
 \{z\in\mathbb{C}: |z-3|\leq 3\} & \mbox{for } i=3; \\
 \{z\in\mathbb{C}: |z-2|\leq 2\}&\mbox{ otherwise.}
 \end{cases} 
\]

We notice that $\cup_{i=1}^{n}K_i=K_4$, so $\lambda_{\max}\leq 6$. For $n=3$ and $n=4$ it is easy to compute the minimum eigenvalue of $M_{D_n}$, it is $3,7....$ and $4$ respectively.
For $n\geq 5$ we can take $e_3+e_n=[0,0,1,0,\ldots,1]^{T}$ and see that $(e_3+e_n)^{T}M_{D_n}(e_3+e_n)=4$, so $4\leq\lambda_{\max}\leq 6$. Therefore
$-4\leq$Ric$(W_{D_n})\leq-2$ for $n\geq 4$ and $\Ric(W_{D_3})=-1.7....$ for $n=3$ .

\textbf{Case 3.} For these exceptional groups it is sufficient to study the maximum eigenvalue of the following  matrices:
 \[
M_{E_6}=
\begin{bmatrix}
1&-1&0&0&0&0\\
-1&2&0&-1&0&0\\
0&0&1&-1&0&0\\
0&-1&-1&3&-1&0\\
0&0&0&-1&2&-1\\
0&0&0&0&-1&1
\end{bmatrix}
\]
\[
M_{E_7}=
\begin{bmatrix}
1&-1&0&0&0&0&0\\
-1&2&0&-1&0&0&0\\
0&0&1&-1&0&0&0\\
0&-1&-1&3&-1&0&0\\
0&0&0&-1&2&-1&0\\
0&0&0&0&-1&2&-1\\
0&0&0&0&0&-1&1
\end{bmatrix}
\]
\[
M_{E_8}=
\begin{bmatrix}
1&-1&0&0&0&0&0&0\\
-1&2&0&-1&0&0&0&0\\
0&0&1&-1&0&0&0&0\\
0&-1&-1&3&-1&0&0&0\\
0&0&0&-1&2&-1&0&0\\
0&0&0&0&-1&2&-1&0\\
0&0&0&0&0&-1&2&-1\\
0&0&0&0&0&0&-1&1
\end{bmatrix}
\]
According to Sage Math  (see \cite{sage}) the maximum eigenvalues are of these matrices are respectively $\lambda_{E_6}=4,3082775...$, $\lambda_{E_7}=4,33420053...$ and $\lambda_{E_8}=4,34292308...$ and this proves the claim. 
\end{proof}

From the Theorem of classification of finite Coxeter groups we know that every finite Coxeter group is a direct product of finite irreducible Coxeter groups. To study the discrete curvature of any graph $V(W)$, W finite Coxeter group, is therefore sufficient to show how the curvature of these graphs interacts with the direct product. This is described in the following theorem.
\begin{teo}\label{V(W)red}
Let $(W,S)$ be any finite Coxeter system with $W=W_1\times\ldots\times W_k$ and $S=S_1\cup\ldots\cup S_k$ where $(W_i,S_i)$ is an irreducible Coxeter group for all $i=1,\ldots,k$.
Let $V(W)$ be the weak order graph associated to $(W,S)$, then:
$$
\Ric(V(W))=min_{1\leq i\leq k} \Ric(V(W_i)).
$$
\end{teo}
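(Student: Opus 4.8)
The plan is to reduce the statement to a purely spectral fact about the matrix $M_W$ introduced in \eqref{matrixV(W)}, by first observing that the derivation carried out in the proof of Theorem \ref{V(W)1} never actually used irreducibility. Indeed, for any finite Coxeter system $(W,S)$ the graph $V(W)$ is the Cayley graph of $W$ with generating set $S$, hence vertex-transitive under left multiplication, so by Lemma \ref{stesso} it suffices to compute the local curvature at $e$. The combinatorial analysis of length-$2$ paths from $e$ — that an equality $s_1s_2 = s_3s_4 \ne e$ with the two pairs distinct forces $s_1=s_4$, $s_2=s_3$ and $m_{1,2}=2$ — rests only on the parabolic intersection identity $W_I\cap W_J = W_{I\cap J}$ from \cite[Proposition 2.4.1]{BB}, which carries no irreducibility hypothesis. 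Thus the entire chain of equalities leading to \eqref{fracteq} goes through verbatim, and I would record its consequence $\Ric(V(W)) = 2-\lambda_W$, where $\lambda_W$ is the maximum eigenvalue of $M_W$.

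The second and decisive step is to identify the block structure of $M_W$ when $W=W_1\times\cdots\times W_k$ with $S=S_1\cup\cdots\cup S_k$. The only geometric input needed is that generators lying in different factors commute: if $s\in S_a$ and $s'\in S_b$ with $a\neq b$, then $ss'=s's$ in the direct product. Reading off the definition \eqref{matrixV(W)}, this means every off-diagonal entry $[M_W]_{i,j}$ with $s_i,s_j$ in different factors vanishes, while each diagonal entry $\#\{k : s_is_k\neq s_ks_i\}$ counts only non-commuting generators, all of which lie in the same factor as $s_i$. After ordering $S$ so that $S_1$ comes first, then $S_2$, and so on, $M_W$ becomes block-diagonal with blocks exactly $M_{W_1},\ldots,M_{W_k}$; that is, $M_W=M_{W_1}\oplus\cdots\oplus M_{W_k}$.

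Finally I would conclude by elementary spectral bookkeeping: the spectrum of a block-diagonal symmetric matrix is the union of the spectra of its blocks, so
\[
\lambda_W = \lambda_{\max}(M_W) = \max_{1\le i\le k}\lambda_{\max}(M_{W_i}) = \max_{1\le i\le k}\lambda_{W_i},
\]
and hence, applying the same general formula $\Ric(V(W_i))=2-\lambda_{W_i}$ to each factor,
\[
\Ric(V(W)) = 2-\lambda_W = 2-\max_i\lambda_{W_i} = \min_i\bigl(2-\lambda_{W_i}\bigr) = \min_{1\le i\le k}\Ric(V(W_i)).
\]

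The main obstacle — really the only subtle point — is the first step: I must be sure that the reduction $\Ric(V(W))=2-\lambda_W$ is genuinely valid for reducible $W$ and is not an artifact of the irreducible setting. The delicate place is the count of length-$2$ paths from $e$, because in a direct product one produces many commuting pairs $ss'=s's$ with $s,s'$ in different factors; one must verify that these are precisely the pairs with $m_{s,s'}=2$, which contribute nothing to the $P_W$ term, exactly as the parabolic-intersection argument guarantees. Everything downstream is then routine linear algebra, and one could alternatively phrase the whole argument by noting that $V(W)$ is the Cartesian product of the $V(W_i)$, but the block-diagonal viewpoint reuses the irreducible proof most directly.
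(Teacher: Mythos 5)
Your proposal is correct and follows essentially the same route as the paper: the paper likewise reuses the derivation of equation \eqref{fracteq} for the reducible group, observes that non-commuting pairs of generators must lie in the same factor $S_i$, concludes that $M_W$ is block-diagonal with blocks $M_{W_1},\ldots,M_{W_k}$, and takes $\lambda_W=\max_i\lambda_{W_i}$. If anything, your write-up is more careful than the paper's, since you explicitly justify (via the parabolic intersection property $W_I\cap W_J=W_{I\cap J}$) that the reduction $\Ric(V(W))=2-\lambda_W$ does not depend on irreducibility, a point the paper passes over with a bare ``Therefore.''
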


\begin{proof}
 We know that two elements $s,s'\in S$ may not commute only if they are contained in the same $S_i$. Therefore
\begin{align*}
\text{Ric}(G_W)&=2-\sup_{f}\frac{\sum_{vv'\neq v'v}(f(v)-f(v'))^2}{\sum_{v\in B(1,e)}f(v)^2}\\
&=2-\sup_{f}\frac{\sum_{i=1}^{k}\sum_{vv'\neq v'v,vv'\in S_i}(f(v)-f(v'))^2}{\sum_{v\in B(1,e)}f(v)^2}
,
\end{align*}
where $f$ the sup is over real function on $W$ such that $f(e)=0$  and $\Gamma(f)(x)\neq 0$. We can again reduce to the study of the eigenvalues of a matrix $M_W$, this is a block matrix of the following kind:

\[
M_W=
  \renewcommand{\arraystretch}{1.2}
  \left[
  \begin{array}{ c c | c c | c c }
    \multicolumn{1}{|c}{} & &  & \mc{ } &  &  \\
    \multicolumn{2}{|c|}{\raisebox{.6\normalbaselineskip}[0pt][0pt]{$M_{W_1}$}} &  & \mc{ } & \textrm{\Large{0}} &  \\
    \cline{1-2}
     & \mc{} & \ddots &    \mc{ }&  &  \\
     & \mc{} &        &\mc{\ddots} &  &  \\
    \cline{5-6}
     & \mc{\textrm{\Large{0}}} &  &  & & \multicolumn{1}{c|}{} \\
     & \mc{} &  &  & \multicolumn{2}{c|}{\raisebox{.6\normalbaselineskip}[0pt][0pt]{$M_{W_k}$}}
  \end{array}
  \right]
\]
So we have that the maximum eigenvalue of $M_W$ is $\lambda_{W}:=max_{1\leq i\leq k}\lambda_{W_i}$.
\end{proof}

We have therefore information on the Ricci curvature of the weak order graph of any finite Coxeter group. We can use this to obtain an isoperimetric inequality for these graphs:

\begin{corollary}\label{weakcoro}
Let $(W,S)$ be a finite Coxeter system with $A\subset W$ a subset of this. Then the following isoperimetric inequalities hold for $V(W)$:
\begin{itemize}
    \item If no dihedral groups appear in the irreducible decomposition of $W$: 
\[
|\partial A|\geq \frac{1}{2}\frac{|W|}{|S|^{|T|}|T|\sqrt{2|\Ric(V(W))|}}|A|\left(1-\frac{|A|}{|W|}\right);
\]
\item If at least one dihedral group appears in the irreducible decomposition of $W$:
\[
|\partial{A}|\geq \frac{1}{2}\sqrt{\frac{|W|}{|S|^{|T|}|T|}}|A|(1-\frac{|A|}{|W|}).
\]
\end{itemize}
where $T$ denotes the set of reflections.
\end{corollary}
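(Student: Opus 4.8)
The plan is to combine the curvature values of Theorems \ref{V(W)1} and \ref{V(W)red} with the isoperimetric estimate of Theorem \ref{sgiso}; the one missing ingredient is a lower bound on the spectral gap $\lambda$ of $V(W)$, which I would obtain from Corollary \ref{caysg}. First I would record that $V(W)$ is the Cayley graph $Cay(S)$ of $W$ on the simple reflections, so that Corollary \ref{caysg} applies directly. Its only combinatorial input is the diameter $d$ of $Cay(S)$; here I would use the standard fact that the diameter of the weak order equals the length $\ell(w_0)$ of the longest element, which equals the number of positive roots, i.e.\ the number of reflections $|T|$. Hence $d=|T|$, and Corollary \ref{caysg} yields
\[
\lambda \geq \lambda_0 := \frac{|W|}{|T|\,|S|^{|T|}}.
\]

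Next I would substitute $K=\Ric(V(W))$ and the bound $\lambda\geq\lambda_0$ into Theorem \ref{sgiso}. Since both $\sqrt{\lambda}$ and $\lambda/\sqrt{2|K|}$ increase with $\lambda$, replacing $\lambda$ by $\lambda_0$ only weakens the inequality, so it suffices to evaluate $\min\{\sqrt{\lambda_0},\,\lambda_0/\sqrt{2|K|}\}$; the two cases of the statement are the two possible values of this minimum. The split is governed by whether $\Ric(V(W))$ vanishes: by Theorems \ref{V(W)1} and \ref{V(W)red} the only irreducible factors with non-negative curvature are $A_1$ (curvature $2$) and the dihedral groups $I_2(m)$ (curvature $-2\cos(\pi/2)=0$), so $\Ric(V(W))=0$ forces every factor to be of type $A_1$ or dihedral and in particular requires a dihedral factor, whereas in the absence of a dihedral factor one has $\Ric(V(W))\neq 0$.

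When $\Ric(V(W))\neq 0$ I would apply Theorem \ref{sgiso} directly with $K=\Ric(V(W))$. A short computation confirms $\lambda_0\leq 2|\Ric(V(W))|$ in every case (the quantity $\lambda_0$ being small, while $|\Ric(V(W))|$ is bounded below by $1$), which makes $\lambda_0/\sqrt{2|K|}$ the smaller term of the minimum; substituting the value of $\lambda_0$ then gives the first displayed inequality. The delicate case is $\Ric(V(W))=0$, the dihedral situation, where Theorem \ref{sgiso} does not apply as stated because it requires $K\neq 0$. Here I would pass to a limit: since $\Ric(V(W))=0\geq K$ for every $K<0$, Theorem \ref{sgiso} holds with each such $K$, and letting $K\to 0^-$ sends $\lambda/\sqrt{2|K|}\to+\infty$, so the minimum collapses to $\sqrt{\lambda}$; as the left-hand side is independent of $K$, this yields $|\partial A|\geq\tfrac12\sqrt{\lambda}\,|A|(1-|A|/|W|)\geq\tfrac12\sqrt{\lambda_0}\,|A|(1-|A|/|W|)$, the second displayed inequality.

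I expect the main obstacles to be the two structural inputs rather than the algebra: the identification $d=|T|$ through the theory of the longest element, and the degenerate analysis when $\Ric(V(W))=0$, which forces the limiting argument above in place of a direct use of Theorem \ref{sgiso}. Everything else reduces to the monotonicity of the two terms of the minimum and routine substitution of $\lambda_0$.
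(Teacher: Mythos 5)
Your proposal follows the paper's proof essentially step for step: identify $V(W)$ with the Cayley graph of $W$ on $S$, apply Corollary \ref{caysg} with diameter $|T|=\ell(w_0)$ to obtain $\lambda\geq\lambda_0=\frac{|W|}{|S|^{|T|}|T|}$, and feed the curvature values of Theorems \ref{V(W)1} and \ref{V(W)red} into Theorem \ref{sgiso}, deciding which term of the minimum is active. Several of your explicit steps are welcome precisions of points the paper leaves implicit: the identification of the diameter with $|T|$, the monotonicity in $\lambda$ of both terms of the minimum, and especially the limiting argument $K\to 0^-$ in the degenerate case, which the paper compresses into ``from this the second part of the statement follows''.

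There is, however, a genuine gap, and it sits exactly where the paper's own proof is flawed. Your case split is governed by whether $\Ric(V(W))$ vanishes; the statement's split is governed by whether a dihedral factor appears, and these do not coincide. By Theorem \ref{V(W)red} the curvature of a product is the \emph{minimum} of the curvatures of the factors, so for example $W=I_2(5)\times A_3$ has a dihedral factor yet $\Ric(V(W))=\min\{0,-1\}=-1\neq 0$. For such mixed groups the second bullet claims the bound with $\sqrt{\lambda_0}$, but your argument files them under ``$\Ric(V(W))\neq 0$'' and delivers only the bound with $\lambda_0/\sqrt{2|\Ric(V(W))|}$, which is the \emph{weaker} of the two precisely because $\lambda_0\leq 2|\Ric(V(W))|$. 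Your phrase ``the dihedral situation'' for the case $\Ric(V(W))=0$ conceals this: vanishing curvature implies a dihedral factor, but not conversely. The paper bridges the same gap by asserting that $\Ric(V(W))=0$ whenever a dihedral factor appears, which is false (it contradicts Theorem \ref{V(W)red} on the example above); so in the mixed case the second bullet is not actually established by either argument, and your bookkeeping, carried out honestly, proves the second inequality only when every irreducible factor is of type $A_1$ or dihedral. A minor further point: your claim $\lambda_0\leq 2|\Ric(V(W))|$ does require the counting argument the paper supplies (every element has length at most $|T|$, hence $|W|\leq|S|^{|T|}|T|$ when $|S|\geq 2$, plus a one-line check for $A_1$), but that is a routine fill-in rather than a gap.
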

\begin{proof}
First we notice that a weak order graph is a Cayley graph of a Coxeter group taking all the simple reflections as set of generators. From Corollary \ref{caysg} we have that $\lambda_{V(W)}\geq \frac{|W|}{|S|^{|T|}|T|}$.
We want to use this result to apply Theorem \ref{sgiso}. We therefore have to study
\[
\min\{\sqrt{\lambda},\frac{\lambda}{\sqrt{2}|\Ric(V(W))|}\}.
\]
From Theorem \ref{V(W)1} and Theorem \ref{V(W)red} we know that $\Ric(V(W))=0$ if a dihedral group appears in the irreducible decomposition of $W$, from this the second part of the statement follows. 
Assume now that the irreducible decomposition of $W$ is dihedral-free. Given an element $w\in W$, we know that it can be expressed through one of its minimal expressions as product of simple reflections. Such a minimal expression has length at most $|T|$. It follows that
\[
|W|\leq |S|+|S|^2+\ldots+|S|^{|T|}\leq |S|^{|T|}|T|.
\]
It is easy to see that $\frac{|W|}{|S|^{|T|+1}}<1$. From Theorem \ref{V(W)1} we notice that $\sqrt{2}|\Ric(V(W))|>1$, it follows that
\[
\sqrt{\frac{|W|}{|S|^{|T|+1}}}>\frac{|W|}{|S|^{|T|+1}}>\frac{|W|}{|S|^{|T|+1}\sqrt{2}|\Ric(V(W))|}.
\]
The first part of the statement follows.
\end{proof}

In the following corollary we present the same result obtained in Corollary \ref{weakcoro} in detail for finite irreducible Coxeter groups.

\begin{corollary}
The following isoperimetric inequalities hold for the weak order graph of finite irreducible Coxeter systems:
\begin{itemize}
    \item[$A_n$] 
    \[
|\partial A|\geq \frac{1}{2}\frac{(n+1)!}{(n)^{\binom{n+1}{2}}\binom{n+1}{2}2\sqrt{\cos(\frac{\pi}{n})}}|A|\left(1-\frac{|A|}{(n+1)!}\right);
\]
    \item[$B_n$]
      \[
|\partial A|\geq \frac{1}{2}\frac{2^nn!}{n^{n^2}n^22\sqrt{\cos(\frac{\pi}{n})}}|A|\left(1-\frac{|A|}{2^nn!}\right)\; n\geq2;
\]  
\item[$D_n$]
\[
|\partial A|\geq \frac{1}{2}\frac{2^{n-1}n!}{n^{n(n-1)}n(n-1)2\sqrt{2}}|A|\left(1-\frac{|A|}{2^{n-1}n!}\right)\; n\geq 3;
\]
\item[$I_2(m)$]
\[
|\partial A|\geq \frac{1}{2}\sqrt{\frac{1}{2^{m-1}}}|A|\left(1-\frac{|A|}{2m}\right);
\]
\item[$E_6$]
\[
|\partial A|\geq \frac{1}{2}\frac{518640}{12^{36}36\sqrt{4.8}}|A|\left(1-\frac{|A|}{518640}\right);
\]
\item[$E_7$]
\[
|\partial A|\geq \frac{1}{2}\frac{2903040}{12^{36}36\sqrt{4.8}}|A|\left(1-\frac{|A|}{2903040}\right);
\]
\item[$E_8$]
\[
|\partial A|\geq \frac{1}{2}\frac{696729600}{30^{120}20\sqrt{4.8}}\left(1-\frac{|A|}{696729600}\right);
\]
\item[$F_4$]
\[
|\partial A|\geq \frac{1}{2}\frac{1152}{12^{24}24\sqrt{2\sqrt{2}}}|A|\left(1-\frac{|A|}{1152}\right);
\]
\item[$H_3$]
\[
|\partial A|\geq \frac{4}{10^{15}\sqrt{2\sqrt{3}}}|A|\left(1-\frac{|A|}{120}\right);
\]
\item[$H_4$]
\[
|\partial A|\geq \frac{1}{2}\frac{14400}{30^{60}60\sqrt{2\sqrt{2}}}|A|\left(1-\frac{|A|}{14400}\right).
\]
\end{itemize}
\end{corollary}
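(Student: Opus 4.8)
The plan is to recognize that each of the ten displayed inequalities is nothing but the specialization of Corollary \ref{weakcoro} to the corresponding irreducible type. Thus the whole proof reduces to assembling, for each type, the four quantities that enter that corollary: the order $|W|$, the rank $|S|$, the number of reflections $|T|$, and the curvature $\Ric(V(W))$ furnished by Theorem \ref{V(W)1}. First I would record the classical combinatorial data (see \cite{BB,hump}): for $A_n$ one has $|W|=(n+1)!$, $|S|=n$, $|T|=\binom{n+1}{2}$; for $B_n$, $|W|=2^n n!$, $|S|=n$, $|T|=n^2$; for $D_n$, $|W|=2^{n-1}n!$, $|S|=n$, $|T|=n(n-1)$; for $I_2(m)$, $|W|=2m$, $|S|=2$, $|T|=m$; and for the exceptional types the orders $1152,120,14400$ and reflection counts $24,15,60$ of $F_4,H_3,H_4$, together with the orders $51840,\,2903040,\,696729600$ and reflection counts $36,63,120$ of $E_6,E_7,E_8$.

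Next I would sort the types according to the two cases of Corollary \ref{weakcoro}. The dihedral group $I_2(m)$ is strictly linear of rank $2$, so Theorem \ref{V(W)1} gives $\Ric(V(I_2(m)))=-2\cos(\pi/2)=0$; it therefore falls into the second case, and substituting $|W|=2m$, $|S|=2$, $|T|=m$ into $\tfrac12\sqrt{|W|/(|S|^{|T|}|T|)}$ collapses to $\tfrac12\sqrt{1/2^{m-1}}$. Every other type is dihedral-free and uses the first inequality, so the only missing ingredient is the factor $\sqrt{2|\Ric(V(W))|}$. For the strictly linear types $A_n,B_n,F_4,H_3,H_4$ (those whose Coxeter graph is a path) Theorem \ref{V(W)1} gives the closed form $\Ric(V(W))=-2\cos(\pi/|S|)$, whence $\sqrt{2|\Ric|}=2\sqrt{\cos(\pi/|S|)}$; evaluating this at the appropriate rank produces the radical appearing in each display, and substituting the recorded invariants yields those inequalities directly.

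The one genuinely delicate step, which I expect to be the main obstacle, concerns $D_n$ and the exceptional types $E_6,E_7,E_8$, for which Theorem \ref{V(W)1} provides only a bound or a numerical approximation rather than a clean closed form. Here one must watch the direction of the estimate: since $\sqrt{2|\Ric|}$ occurs in the \emph{denominator}, replacing $|\Ric|$ by an overestimate only shrinks the right-hand side and hence keeps the inequality valid. Concretely, $\Ric(V(D_n))\geq -4$ gives $|\Ric|\leq 4$, so $\sqrt{2|\Ric|}\leq 2\sqrt2$ and $2\sqrt2$ is a legitimate substitution; for $E_6,E_7,E_8$ the approximate values of Theorem \ref{V(W)1} all satisfy $|\Ric|<2.4$, whence $\sqrt{2|\Ric|}<\sqrt{4.8}$ and $\sqrt{4.8}$ may be used. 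After these conservative replacements each inequality follows by plugging the invariants into Corollary \ref{weakcoro}, the remaining work being purely the bookkeeping of the ten substitutions (with the low-rank coincidences $A_2=I_2(3)$ and $B_2=I_2(4)$ handled by the dihedral line).
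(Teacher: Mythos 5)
Your plan is, in substance, the paper's own proof: the paper supplies no argument for this corollary precisely because it is meant as the specialization of Corollary \ref{weakcoro} that you describe. Your combinatorial data ($|W|$, $|S|$, $|T|$ for each type) are correct, routing $I_2(m)$ (and the coincidences $A_2=I_2(3)$, $B_2=I_2(4)$) through the dihedral case is right, and your central observation --- that $\sqrt{2|\Ric(V(W))|}$ sits in a \emph{denominator}, so that the overestimates $|\Ric|\leq 4$ for $D_n$ and $|\Ric|<2.4$ for the $E$-types only shrink the right-hand side --- is exactly the point that legitimizes the $D_n$ and exceptional lines, where Theorem \ref{V(W)1} gives only bounds or numerical values.

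The gap is in your final claim that "substituting the recorded invariants yields those inequalities directly": it does not, for the statement as printed. For $F_4$, $H_3$, $H_4$, $E_6$, $E_8$ the printed denominators have base equal to the Coxeter number ($12^{24}$, $10^{15}$, $30^{60}$, $12^{36}$, $30^{120}$) rather than $|S|^{|T|}$ ($4^{24}$, $3^{15}$, $4^{60}$, $6^{36}$, $8^{120}$), and the $H_3$ radical is $\sqrt{2\sqrt{3}}$ rather than $\sqrt{2\cdot|-2\cos(\pi/3)|}=\sqrt{2}$; these discrepancies only enlarge the denominator, so your derivation still implies those lines, but the proof must state and justify these extra weakenings instead of asserting a direct match. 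For $E_7$, however, no such rescue exists: your substitution produces $7^{63}\cdot 63\cdot\sqrt{2|\Ric|}$ in the denominator, whereas the printed line has $12^{36}\cdot 36\cdot\sqrt{4.8}$ (evidently the $E_6$ denominator copied over), and $7^{63}\cdot 63$ exceeds $12^{36}\cdot 36$ by a factor of roughly $4\times 10^{14}$; since these sit in a denominator, the printed $E_7$ inequality is about fourteen orders of magnitude \emph{stronger} than anything Corollary \ref{weakcoro} yields, and within the paper's toolkit (the only spectral-gap lower bound available when $\Ric<0$ is Corollary \ref{caysg}) it cannot be derived at all. Similarly, the printed $E_6$ numerator $518640$ is not $|W(E_6)|=51840$; with that numerator the inequality even fails at $A=W$. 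So your argument proves corrected versions of these lines, which is surely what the author intended, but as a proof of the literal statement it breaks down at $E_7$ and glosses over the non-matching constants elsewhere.
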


For the last result of this Section we apply the same methods used to prove  Theorems $\ref{V(W)1}$ and $\ref{V(W)red}$ to compute the Ricci curvature of weak orders of affine Weyl groups.
 As already noticed the generators of an affine Weyl group has a finite number of generators, therefore the Hasse diagram associated to the weak order is a locally finite graph. 
\begin{teo}\label{V(W)2}
Let $(W,S)$ be an affine Weyl group, $V(W)$ the Hasse graph of the weak order associated to it. Then we have
\begin{itemize}
\item $\Ric(V(\tilde{A}_n))=2\cos(\frac{2\pi}{n}\lceil{\frac{n}{2}}\rceil)$;
\item $\Ric(V(W))=-2\cos(\frac{\pi}{|S|})$, if $W=\tilde{C}_n,\tilde{F_4},\tilde{G_2}, \tilde{A_1}$;
\item $-4\leq \Ric(V(W))\leq -2$ if $W=\tilde{B}_n$;
\item $-4\leq \Ric(V(W))\leq -2$ if $W=\tilde{D}_n$ if $n\geq 5$ and $\Ric(V(\tilde{D}_4))=-3$;
\item $\Ric(V(\tilde{E_6}))\sim -2.414 $ $,\Ric(V(\tilde{E_7}))\sim-2.36$,  $\Ric(V(\tilde{E_8}))\sim-2.34$.

\end{itemize}
\end{teo}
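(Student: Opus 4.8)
The plan is to run the very same reduction used in the proof of Theorem \ref{V(W)1}. Since an affine Weyl group has only finitely many simple reflections, $V(W)$ is locally finite, and the multiplication automorphisms together with Lemma \ref{stesso} again reduce the computation to the local curvature at the identity $e$. The combinatorial description of the length-2 path subgraph of $e$ — two commuting simple reflections $s,s'$ produce two paths joining $u=ss'$ to $e$, while a non-commuting pair produces a single path — rests only on the general identity $W_I\cap W_J=W_{I\cap J}$ for parabolic subgroups, valid in every Coxeter group, so nothing in that argument is special to the finite case. I would therefore recover exactly equation \eqref{fracteq}, namely
\[
\Ric(V(W))=2-\sup_{f}\frac{\sum_{\{s,s'\}\subset S,\ m_{s,s'}\geq 3}(f(s)-f(s'))^2}{\sum_{s\in S}f(s)^2}=2-\lambda_W,
\]
where $\lambda_W$ is the largest eigenvalue of the matrix $M_W$ of \eqref{matrixV(W)}. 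As observed after \eqref{matrixV(W)}, $M_W$ depends only on which pairs of generators commute; that is, it is precisely the combinatorial Laplacian of the Coxeter graph of $W$ with its edge labels forgotten. The whole theorem thus reduces to computing, or bounding, the top Laplacian eigenvalue of each affine Coxeter diagram.

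I would then split into cases according to the shape of the unlabelled affine diagram, exactly as in Theorem \ref{V(W)1}. When the diagram is a path — the cases $\tilde C_n,\tilde F_4,\tilde G_2,\tilde A_1$ — the matrix $M_W$ is the tridiagonal matrix of Case 1, so Theorem \ref{teowen} (with $\alpha=\beta=1$, $b=2$, $a=c=-1$) gives $\lambda_W=2(1+\cos(\pi/|S|))$ and hence $\Ric(V(W))=-2\cos(\pi/|S|)$. When the diagram is a cycle — the case $\tilde A_n$ — the Laplacian is a circulant Toeplitz matrix with $c_0=2$, $c_1=c_{|S|-1}=-1$ and all other entries $0$, so Theorem \ref{circ} yields $\lambda_j=2-2\cos(2\pi j/|S|)$; maximizing over $j$ (attained at $j=\lceil |S|/2\rceil$) produces the stated $\Ric(V(\tilde A_n))=2\cos(\frac{2\pi}{n}\lceil\frac n2\rceil)$.

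For the branched diagrams $\tilde B_n$ and $\tilde D_n$ no closed eigenvalue formula is available, so I would bound $\lambda_W$ from both sides as in Case 2. These diagrams have maximum vertex degree $3$ (the fork nodes), and Gershgorin's Theorem (Theorem \ref{gersh}) confines every eigenvalue to the union of the discs $\{|z-d_i|\le d_i\}$, the largest of which is $\{|z-3|\le 3\}$; hence $\lambda_W\le 6$ and $\Ric\ge -4$. For the reverse inequality I would exhibit a test vector supported near a degree-$3$ vertex, as with $e_3+e_n$ in Case 2, to force $\lambda_W\ge 4$ and so $\Ric\le -2$; equivalently one may invoke the standard Laplacian estimate $\lambda_W\ge d_{\max}+1=4$, where $d_{\max}=3$. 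The small diagram $\tilde D_4$ is the star $K_{1,4}$, whose Laplacian has top eigenvalue $5$, so there $\Ric=-3$ exactly. Finally, for the exceptional diagrams $\tilde E_6,\tilde E_7,\tilde E_8$ I would write out the corresponding Laplacians explicitly and read off the largest eigenvalue numerically with SageMath, just as in Case 3 of Theorem \ref{V(W)1}.

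The main obstacle is the branched family: unlike the path and cycle cases, $\tilde B_n$ and $\tilde D_n$ admit no eigenvalue formula, so the bounds $-4\le\Ric\le -2$ must come from the interplay of Gershgorin (upper bound on $\lambda_W$) and a carefully chosen test vector or the maximum-degree estimate (lower bound on $\lambda_W$), with the degenerate cases such as $\tilde D_4$ handled separately. A secondary point that must be verified explicitly is that the length-2 path count at $e$ genuinely transfers from the finite to the affine setting; this is exactly where one relies on the parabolic intersection identity rather than on finiteness of $W$.
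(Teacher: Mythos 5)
Your proposal is correct and follows essentially the same route as the paper: you rerun the reduction of Theorem \ref{V(W)1} at the identity (justified, as you note, by the parabolic-intersection identity, which requires no finiteness), obtain $\Ric(V(W))=2-\lambda_W$ with $\lambda_W$ the top eigenvalue of the Laplacian $M_W$ of the unlabelled affine diagram, and then treat the cases exactly as the paper does (path via Theorem \ref{teowen}, cycle via Theorem \ref{circ}, branched diagrams via Gershgorin, the star $\tilde{D}_4$ separately, and the exceptional types numerically). The only divergence is your lower bound for the branched family: the paper uses the unnormalized test vector $e_3+e_n$, whose Rayleigh quotient is $4/2=2$ rather than the asserted $4$, whereas your appeal to the standard Laplacian estimate $\lambda_W\geq d_{\max}+1=4$ establishes the claimed bound correctly.
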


\begin{proof}
The first part of the proof of Theorem $\ref{V(W)1}$
holds for affine Weyl groups as well. We study therefore the eigenvalues of the matrix $M_W$ described  in \ref{matrixV(W)} depending on the group $W$. We divide the rest of the proof in five cases:
\begin{itemize}
    \item[1)] $W=\tilde{A}_n$; 
    \item[2)] $W$ strictly linear affine Weyl group;
    \item[3)] $W=\tilde{B}_n$;
    \item[4)] $W=\tilde{D}_n$;
    \item[5)] $W=\tilde{E}_6,\tilde{E}_7,\tilde{E}_8$.
\end{itemize}

\textbf{Case 1} We start with the groups  of type $\tilde{A}_n$, to study the Ricci curvature of the weak order we consider its Coxeter graph which is
\begin{center}\label{figl}
  \begin{tikzpicture}[scale=1]
     \draw (0,0) node[anchor=north]  {$\tilde{s}_1$};
     \draw (1,0) node[anchor=north]  {$\tilde{s}_2$};
     \draw (3,0) node[anchor=north]  {$\tilde{s}_{n-2}$};
     \draw (4,0) node[anchor=north]  {$\tilde{s}_{n-1}$};
     \draw (2,1) node[anchor=south]  {$\tilde{s}_n$};
     \draw (2,-0.1) node[anchor=south]  {\textrm{\Large{$\ldots$}}};
\foreach \Point in {(0,0), (1,0),(3,0),(4,0),(2,1)}{
    \node at \Point {\textbullet};
}
    \draw[thick] (0,0) -- +(1,0);
    \draw[thick] (3,0) -- +(1,0);
    \draw[thick] (0,0) -- +(2,1);
    \draw[thick] (4,0) -- +(-2,1);
    \end{tikzpicture}
\end{center}
and deduce that $\Ric(G_{\tilde{A}_n})=2-\lambda_{\tilde{A}_n}$ where $\lambda_{\tilde{A}_n}$ is the greatest eigenvalue of the matrix:
\[
M_{\tilde{A}_n}=
\begin{bmatrix}
2&-1&&&-1\\
-1&\ddots&\ddots&\textrm{\Large{0}}&\\
&\ddots&\ddots&\ddots&\\
&\textrm{\Large{0}}&\ddots&\ddots&-1 \\
-1&&&-1&2\\
\end{bmatrix}
.
\]
This is a circulant Toeplitz matrix with $c_0=2$, $c_1=-1$, $c_{n-1}=-1$ and $c_2=\ldots=c_{n-2}=0$. From Theorem \ref{circ} we know that its eigenvalues are 
\[
\lambda_j=2-2\cos\left(\frac{2j}{n}\right)\quad 0\leq j\leq n-1. 
\]
The highest value for $\lambda_j$ is reached  for $j=\lfloor{\frac{n}{2}}\rfloor$ and $j=\lceil{\frac{n}{2}}\rceil$. 
We conclude that $\lambda_{\tilde{A}_n}=2-2\cos(\frac{2\pi}{n}\lceil{\frac{n}{2}}\rceil)=2-2\cos(\frac{2\pi}{n}\lfloor{\frac{n}{2}}\rfloor)$ and therefore $Ric(G_{\tilde{A}_n})=2\cos(\frac{2\pi}{n}\lfloor{\frac{n}{2}}\rfloor)$, in particular, it is $-2$ if $n$ is even.

\textbf{Case 2}.
$\tilde{C_n}$, $\tilde{F_4}$, $\tilde{G_2}$ and $\tilde{A_1}$ are strictly linear Coxeter groups, the curvature for their weak order graph is computed as for $A_n$ in Theorem \ref{V(W)1} and is $-2\cos(\frac{\pi}{|S|})$.

\textbf{Case 3.} The case of $\tilde{B}_n$ leads to the same unlabeled graph as the one associated to $D_n$, with $n\geq 4$. We can therefore conclude that 
$ -4\leq$Ric$(\tilde{B}_n)\leq -2$.

\textbf{Case 4.}
About the case $\tilde{D}_n$ we can see that for $n\geq 5$ the matrix associated to its quadratic form is:
\[
M_{\tilde{D}_n}=
\begin{bmatrix}
1 &0 &-1&&&&&&\\
0 &1 &-1&&&&&&\\
-1&-1&3      &\ddots&&&\textrm{\Huge{0}}&&\\
  &  & \ddots & 2 & \ddots &&&&\\
  &  &        & \ddots & \ddots & \ddots &&&\\
  &  &      &          & \ddots & 2 &  \ddots &&\\
  &  &\textrm{\Huge{0}}&          &        &  \ddots&3 &-1&-1\\
  &  &      &          &        &        &-1&1&0\\
  &  &      &          &        &        & -1&0&1  
\end{bmatrix}
\]
Its Gershgorin's circles are :
\[
K_i = \begin{cases} \{z\in\mathbb{C}| |z-1|\leq 1\} &\mbox{for } i=1,2,n-1,n \\ 
 \{z\in\mathbb{C}| |z-3|\leq 3\} & \mbox{for } i=3,n-2 \\
 \{z\in\mathbb{C}| |z-2|\leq 2\}&\mbox{ otherwise.}
 \end{cases} 
\]

By Gershgorin's Theorem we can give an upper bound to $\lambda_{\tilde{D}_n}$, namely that it is smaller than $6$. We take now $v=[0,0,1,0,\ldots,0,1]\in\mathbb{R}^{n}$, then we have that
\[
v^{T}M_{\tilde{D}_n}v=4.
\]
So, as in the case of $\tilde{B_n}$ we conclude that $-4\leq$Ric$(V(\tilde{D}_n))\leq -2$ if $n\geq 5$

We study now the case of $\Ric(W(\tilde{D_4}))$ the associated matrix is the following:

\[
M_{\tilde{D}_4}=
\begin{bmatrix}
1 &0 &-1&0&0\\
0 &1 &-1&0&0\\
-1&-1&4&-1&-1 \\
0 &0 & -1 & 1 & 0\\
0 & 0& -1 & 0 & 1
\end{bmatrix}
\]

whose eigenvalues are $[5,1,1,1,0]$. We conclude that $\Ric(V(\tilde{D}_4))=-3$.

\textbf{Case 5}.
To study $\tilde{E_6}$, $\tilde{E_7}$ and $\tilde{E_8}$
it is sufficient to study the eigenvalues of the following matrices:
\[
M_{\tilde{E_6}}=
\begin{bmatrix}
1&-1&0&0&0&0&0\\
-1&2&-1&0&0&0&0\\
0&-1&3&-1&0&-1&0\\
0&0&-1&2&-1&0&0\\
0&0&0&-1&1&0&0\\
0&0&-1&0&0&2&-1\\
0&0&0&0&0&-1&1
\end{bmatrix}
\]
\[
M_{\tilde{E_7}}=
\begin{bmatrix}
1&-1&0&0&0&0&0&0\\
0&-1&2&-1&0&0&0&0\\
0&0&-1&2&-1&0&0&0\\
0&-0&-1&3&-1&0&0&-1\\
0&0&0&-1&2&-1&0&0\\
0&0&0&0&-1&2&-1&0\\
0&0&0&0&0&-1&1&0\\
0&0&0&-1&0&0&0&1
\end{bmatrix}
\]
\[
M_{\tilde{E_8}}=
\begin{bmatrix}
1&-1&0&0&0&0&0&0&0\\
-1&2&-1&0&0&0&0&0&0\\
0&-1&3&-1&0&0&0&0&-1\\
0&0&-1&2&-1&0&0&0&0\\
0&0&0&-1&2&-1&0&0&0\\
0&0&0&0&-1&2&-1&0&0\\
0&0&0&0&0&-1&2&-1&0\\
0&0&0&0&0&0&-1&1&0\\
0&0&-1&0&0&0&0&1&0
\end{bmatrix}
\]

The result follows.

\end{proof}

\section*{Acknowledgement}
The author would like to thank Francesco Brenti for suggesting her this problem and for many useful discussions. Furthermore she acknowledges the MIUR Excellence Department Project awarded to the Department of Mathematics, University of Rome Tor Vergata, CUP E83C18000100006.

\printbibliography

\end{document}